\renewcommand{\theequation}{\thesection.\arabic{equation}}
\newtheorem{theorem}{Theorem}[section]
\newtheorem{theorem-lettre}{Theorem}
\newtheorem{theorem-alpha}{Theorem}
\newtheorem{lemma}[theorem]{Lemma}
\newtheorem{proposition}[theorem]{Proposition}
\newcommand{\eqnsection}{
\renewcommand{\theequation}{\thesection.\arabic{equation}}
    \makeatletter
    \csname  @addtoreset\endcsname{equation}{section}
    \makeatother}
\def\e{{\mathbb E}}
\def\p{{\mathbb P}}
\def\z{{\mathbb Z}}
\def\n{{\mathbb N}}
\def\ee{\mathrm{e}}
\def\R{{\mathbb R}}
\def\Z{{\mathbb Z}}
\def\N{{\mathbb N}}
\def\E{{\mathbb E}}
\def\F{\mathcal F}
\author[L.-P. Arguin]{Louis-Pierre ARGUIN}
\thanks{L.-P. A. is supported by a NSERC discovery grant and a grant FQRNT {\it Nouveaux chercheurs}.}
\address{D\'epartement de Math\'ematiques et Statistique, Universit\'e de Montr\'eal, Montr\'eal, H3T 1J4, Canada}
\email{arguinlp@dms.umontreal.ca}
\author[O. Zindy]{Olivier ZINDY}
\thanks{O.Z. is partially supported by the french ANR project MEMEMO2 2010 BLAN 0125.}
\address{Laboratoire de Probabilit\'es et Mod\`eles Al\'eatoires, CNRS UMR 7599, Universit\'e Paris 6, 4
place Jussieu, 75252 Paris Cedex 05, France}
\email{olivier.zindy@upmc.fr}
\keywords{Gaussian free field, Gibbs measure, Poisson-Dirichlet variable, spin glasses} \subjclass[2000]{primary 60G15, 60F05; secondary 82B44, 60G70, 82B26}
\title[Poisson-Dirichlet statistics and 2D Gaussian Free Field]{Poisson-Dirichlet Statistics for the extremes of the two-dimensional discrete Gaussian Free Field}
\date{6 October, 2013}
\begin{document}

\maketitle

\bigskip

{\footnotesize \noindent{\slshape\bfseries Abstract.} 
In a previous paper, the authors introduced an approach to prove that the statistics of the extremes of a log-correlated Gaussian field converge 
to a Poisson-Dirichlet variable at the level of the Gibbs measure at low temperature and under suitable test functions.
The method is based on showing that the model admits a one-step replica symmetry breaking in spin glass terminology.
This implies Poisson-Dirichlet statistics by general spin glass arguments.
In this note, this approach is used to prove Poisson-Dirichlet statistics for the two-dimensional discrete Gaussian free field, where boundary effects demand a more delicate analysis.
}

\bigskip
\bigskip


\section{Introduction}

\subsection{The model}

Consider a finite box $A$ of $\Z^2$. 
The Gaussian free field (GFF) on $A$ with Dirichlet boundary condition is the centered Gaussian field $(\phi_v, v\in A)$ with the covariance matrix
\begin{equation}
\label{eqn: cov}
G_A(v,v'):= E_v\left[\sum_{k=0}^{\tau_{A}} 1_{v'}(S_k)\right]\ ,
\end{equation}
where $(S_k, k\geq 0)$ is a simple random walk with $S_0=v$ of law $P_v$
killed at the first exit time of $A$, $\tau_{A}$, i.e.~ the first time where the walk reaches the boundary $\partial A$. 
Throughout the paper, for any $A\subset \Z^2$, $\partial A$ will denote the set of vertices in $A^c$ that share an edge with a vertex of $A$.
We will write $\p$ for the law of the Gaussian field and $\E$ for the expectation.
For $B\subset A$, we denote the $\sigma$-algebra generated by $\{\phi_v, v\in B\}$ by $\F_B$.

We are interested in the case where $A=V_N:=\{1,\dots, N\}^2$ in the limit $N\to\infty$. 
For $0\leq \delta < 1/2$, we denote by $V_N^\delta$ the set of the points of $V_N$ whose distance to the boundary $\partial V_N$
is greater than $\delta N$. 
In this set, the variance of the field diverges logarithmically with $N$, cf. Lemma \ref{lem: green estimate} in the appendix, 
\begin{equation}
\label{eqn: variance}
\E[\phi^2_v]=G_{V_N}(v,v)= \frac{1}{\pi} \log N^2 + O_N(1), \qquad { \forall v \in V_N^\delta},
\end{equation}
where $O_N(1)$ will always be a term which is uniformly bounded in $N$ and in $v\in V_N$. 
(The term $o_N(1)$ will denote throughout a term which goes to $0$ as $N\to\infty$ uniformly in all other parameters.)
Equation \eqref{eqn: variance} follows from the fact that for $v\in V_N^\delta$ and $u\in \partial V_N$, $\delta N \leq \| v-u \| \leq \sqrt{2}(1-\delta) N$, where $\|\cdot\|$ denotes the Euclidean norm on $\Z^2$.
A similar estimate yields an estimate on the covariance
\begin{equation}
\label{eqn: covariance}
 \E[\phi_v\phi_{v'}]=G_{V_N}(v,v')=\frac{1}{\pi} \log \frac{N^2}{\|v-v'\|^{ 2}} +O_N(1), \qquad  \forall v,v'\in V_N^\delta .
\end{equation}
In view of \eqref{eqn: variance} and \eqref{eqn: covariance}, the Gaussian field $(\phi_v,v\in V_N)$ is said to be {\it log-correlated}.
On the other hand, there are many points that are outside $V_N^{\delta}$ (of the order of $N^2$ points) for which the estimates \eqref{eqn: variance} and \eqref{eqn: covariance} are not correct. 
Essentially, the closer the points are to the boundary the lesser are the variance and covariance as the simple random walk in \eqref{eqn: cov} has a higher probability of exiting $V_N$ early. 
This decoupling effect close to the boundary complicates the analysis of the extrema of the GFF by comparison with log-correlated Gaussian fields with stationary distribution.


\bigskip

\subsection{Main results}
It was shown by Bolthausen, Deuschel, and Giacomin \cite{bolthausen-deuschel-giacomin} that the maximum of the GFF in $V_N^\delta$ satisfies

\begin{equation}
\lim_{N\to\infty} \frac{\max_{v\in V_N^\delta} \phi_v }{\log N^2}= \sqrt{\frac{2}{\pi}}, \qquad  \text{ in probability.}
\end{equation}
A comparison argument using Slepian's lemma can be used to extend the result to the whole box $V_N$.
Their technique was later refined by Daviaud \cite{daviaud} who computed the {\it log-number of high points} in $V_N^\delta$: for $0<\lambda<1$,

\begin{equation}
\label{eqn: daviaud}
\lim_{N\to\infty} \frac{1}{\log N^2}  \log \#\{v\in V_N^\delta: \phi_v \,  \geq \, \lambda  \sqrt{\frac{2}{\pi}} \log N^2\}= 1-\lambda^2, \qquad \text{ in probability.}
\end{equation}

It is a simple exercise to show using the above results that the {\it free energy} in $V_N$ of the model is given by
\begin{equation}
\label{eqn: free energy GFF}
f(\beta):=\lim_{N\to\infty}\frac{1}{\log N^2}\log \sum_{v\in V_N}e^{\beta \phi_v}=
\begin{cases}
1+\frac{\beta^2 }{2\pi}, &\text{ if $\beta\leq \sqrt{2\pi}$,}\\
\sqrt{\frac{2}{\pi}}\beta ,  &\text{ if $\beta\geq  \sqrt{2\pi}$,}
\end{cases}
\qquad
 \text{ a.s. and in $L^1$. }
\end{equation}
Indeed, there is the clear lower bound $\log \sum_{v\in V_N}e^{\beta \phi_v} \geq \log \sum_{v\in V_N^\delta}e^{\beta \phi_v}$,
which can be evaluated using the log-number of high points \eqref{eqn: daviaud} by Laplace's method.
The upper bound is obtained using a comparison argument with i.i.d. centered Gaussians.

A striking fact is that the three above results correspond to the expressions for $N^2$ independent Gaussian variables of variance $\frac{1}{\pi}\log N^2$. 
In other words, correlations have no effects on the above observables of the extremes.
The purpose of the paper is to extend this correspondence to observables related to the Gibbs measure.

To this aim, consider the {\it normalized Gibbs weights} or {\it Gibbs measure}
\begin{equation*}
{\mathcal G}_{\beta, N}(\{v\}):=\frac{\ee^{\beta \phi_v}}{Z_N(\beta)}, \qquad v \in  V_N ,
\end{equation*}
where $Z_N(\beta):= \sum_{v\in V_N} \ee^{\beta \phi_v}$.
We consider the normalized covariance or {\it overlap}
\begin{equation}
\label{eqn: q}
q(v,v'): =\frac{\E[\phi_v\phi_{v'}]}{\frac{1}{\pi}\log N^2}, \qquad \forall v,v' \in  V_N.
\end{equation}
This is the covariance divided by the dominant term of the variance in the bulk. 

In spin glasses, the relevant object to classify the extreme value statistics of strongly correlated variables is the {\it two-overlap distribution function}
\begin{equation}
\label{eqn: x}
x_{\beta,N}(q):=  \e \left[  {\mathcal G}_{\beta, N}^{\times 2} \left\{q(v,v') \le q \right\} \right], \qquad  0\leq q\leq 1 .
\end{equation}
The main result shows that the 2D GFF falls within the class of models that exhibit a {\it one-step replica symmetry breaking} at low temperature.
\begin{theorem}
\label{thm: overlap}
For $\beta > \beta_c=\sqrt{2\pi}$,
 $$
\lim_{N\to\infty }x_{\beta,N}(r):= \lim_{N\to\infty } \e \left[  {\mathcal G}_{\beta, N}^{\times 2} \left\{q(v,v') \le q \right\} \right]=
\begin{cases}
\frac{\beta_c}{\beta} &\text{ for $0 \le r <1$,}\\
1 &\text{ for $r=1$.}
\end{cases}
$$
\end{theorem}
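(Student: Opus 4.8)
The plan is to show that the limiting overlap distribution is a two-atom measure concentrated on $\{0,1\}$, carrying mass $\beta_c/\beta$ at $0$ and mass $1-\beta_c/\beta$ at $1$; since $q(v,v')\le 1$ always, this is exactly the displayed formula. The first reduction is geometric. By the covariance estimate \eqref{eqn: covariance}, two bulk points satisfy
\begin{equation*}
q(v,v')=1-\frac{\log\|v-v'\|}{\log N}+o_N(1),
\end{equation*}
so that $\{q(v,v')\le r\}$ coincides, up to this correction, with $\{\|v-v'\|\ge N^{1-r}\}$. Thus the overlap of a Gibbs-sampled pair is governed by whether the points are microscopically close ($\|v-v'\|=N^{o(1)}$, overlap $\to 1$) or macroscopically separated ($\|v-v'\|=N^{\Theta(1)}$, overlap $\to 0$). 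Before anything else I would dispose of the boundary: since the variance drops strictly below $\tfrac1\pi\log N^2$ outside $V_N^\delta$ (the walk in \eqref{eqn: cov} exits early), a first-moment bound on $\sum_{v\notin V_N^\delta}\ee^{\beta\phi_v}$ together with the lower bound on $Z_N(\beta)$ implied by \eqref{eqn: free energy GFF} gives $\E[{\mathcal G}_{\beta,N}(V_N\setminus V_N^\delta)]\to 0$, so the sampled pair may be taken in the bulk.

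The second step is the \emph{overlap gap}: for every $0<a<b<1$ I would prove
\begin{equation*}
\lim_{N\to\infty}\E\!\left[{\mathcal G}_{\beta,N}^{\times 2}\{a\le q(v,v')\le b\}\right]=0,
\end{equation*}
i.e.\ no Gibbs mass sits on pairs at intermediate distance $N^{1-b}\le\|v-v'\|\le N^{1-a}$. This is the main quantitative input, and I would obtain it by a truncated second-moment computation: restrict to the high points $\{\phi_v\ge\lambda\sqrt{2/\pi}\log N^2\}$, whose cardinality is controlled by Daviaud's estimate \eqref{eqn: daviaud}, and bound $\E[\sum_{a\le q\le b}\ee^{\beta(\phi_v+\phi_{v'})}]$ against $\E[Z_N(\beta)]^2$ on the event that $Z_N(\beta)$ is not atypically small. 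The Gaussian density of $(\phi_v,\phi_{v'})$ at intermediate overlap makes the joint contribution subexponentially smaller than the square of the single-site contribution, forcing the ratio to $0$. Granting this, $x_{\beta,N}(r)$ is asymptotically constant on $(0,1)$, and the endpoint $r=0$ follows by letting $a\downarrow 0$ after controlling the slightly negative overlaps (distances $\ge N$).

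The third and hardest step is to identify the common value, equivalently the same-cluster mass
\begin{equation*}
\lim_{\eps\to 0}\,\lim_{N\to\infty}\E\!\left[{\mathcal G}_{\beta,N}^{\times 2}\{q(v,v')>1-\eps\}\right]=1-\frac{\beta_c}{\beta}.
\end{equation*}
The constant is dictated by the Poissonian structure of the top of the field: the recentred extremes $\phi_v-m_N$, with $m_N=\sqrt{2/\pi}\log N^2$, converge (after clustering at scale $N^{o(1)}$) to a Poisson point process of intensity proportional to $\ee^{-\beta_c x}\,\d x$, the exponent $\beta_c=\sqrt{2\pi}$ being fixed by the slope $\sqrt{2/\pi}$ of the free energy \eqref{eqn: free energy GFF}. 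For such a process the normalised cluster weights $\ee^{\beta\eta_i}/\sum_j\ee^{\beta\eta_j}$ form a Poisson--Dirichlet family with parameter $\beta_c/\beta$, for which $\E[\sum_i p_i^2]=1-\beta_c/\beta$; this is precisely the same-cluster probability.

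The main obstacle is carrying out this last identification \emph{without} presupposing the Poisson--Dirichlet limit, since that limit is the conclusion the one-step replica symmetry breaking ($1$-RSB) statement is meant to feed into. I would therefore replace it by a self-contained estimate on the partition function: show that the ratio $\E[\,Z_N(\beta)^{-2}\sum_{v}\ee^{2\beta\phi_v}\,]$, restricted to same-cluster contributions, stabilises to $1-\beta_c/\beta$ by comparing $Z_N(\beta)$ with the partition function of $N^2$ independent Gaussians of variance $\tfrac1\pi\log N^2$, for which \eqref{eqn: free energy GFF} and the REM value $1-\beta_c/\beta$ are classical, and by controlling the comparison error through Slepian--Gaussian interpolation while tracking the boundary contributions flagged above. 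It is in making this error uniform — correlations at all intermediate scales on one side, the non-stationary boundary layer on the other — that the delicate part of the argument resides.
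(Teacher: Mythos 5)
Your overall picture is correct (the limit is a two--point law on $\{0,1\}$ with masses $\beta_c/\beta$ and $1-\beta_c/\beta$, and the boundary layer outside $A_{N,\rho}$ must first be shown to carry no Gibbs weight, which your first-moment argument handles in essentially the same way as Lemma \ref{lem: boundary}). But the decisive step --- identifying the constant $\beta_c/\beta$ --- is not actually proved in your proposal, and the method you suggest for it cannot work as stated. You propose to pin down the same-cluster mass by ``comparing $Z_N(\beta)$ with the partition function of $N^2$ independent Gaussians'' and ``controlling the comparison error through Slepian--Gaussian interpolation.'' Slepian-type comparisons control $\E[\log Z_N]$ and the law of the maximum, but they transfer no information about the overlap distribution: the $t$-derivative of the interpolated free energy is $\tfrac{\beta^2}{2}\sum_{v\neq v'}\big(\E[\phi_v\phi_{v'}]-0\big)\,\E\mathcal G_t^{\times 2}(\cdot)$, i.e.\ it is (up to normalisation) exactly the expected overlap you are trying to compute, multiplied by $\log N$. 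The ``error'' you defer to the end is the theorem itself, so the argument is circular. Note also that the mere equality of the free energies \eqref{eqn: free energy GFF} and \eqref{eqn: rem free} (which you invoke) is compatible with many overlap distributions; some genuinely new input that sees the scale structure of the correlations is required.

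The paper supplies that input by perturbing the field scale by scale: it introduces the $(\alpha,\vec\sigma)$-GFF $\psi_v=\sigma_1\phi_{[v]_\alpha}+\sigma_2(\phi_v-\phi_{[v]_\alpha})$ with $\vec\sigma=(1,1+u)$, computes its free energy (Theorem \ref{thm:freeenergyperturbed}, via a GREM comparison for the upper bound and a Daviaud-type high-points recursion for the lower bound), and then uses convexity in $u$ together with Gaussian integration by parts to show $\int_\alpha^1 x_\beta(r)\,dr=\tfrac{\pi}{\beta^2}\partial_u f^{(\alpha,\vec\sigma)}(\beta)\big|_{u=0}=\tfrac{\beta_c}{\beta}(1-\alpha)$ for a dense set of $\alpha$. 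This single identity simultaneously yields the constancy of $x_\beta$ on $[0,1)$ and the value of the constant, so no separate ``overlap gap'' lemma is needed; by contrast your step~2 (a truncated second-moment bound ruling out intermediate overlaps) would at best give constancy without identifying the constant. A salvageable variant of your plan would be to differentiate the free energy in $\beta$ itself and integrate by parts to get $\lim_N\E\mathcal G_{\beta,N,\rho}^{\times 2}[q]=1-\beta_c/\beta$, and combine this mean computation with your overlap gap; but that still requires a full proof of the gap statement and a careful treatment of the non-uniform variance $\E[\phi_v^2]$ near the boundary, neither of which your sketch provides.
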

Note that for $\beta\leq \beta_c$, it follows from \eqref{eqn: free energy GFF} that the overlap is $0$ almost surely.
The result is the analogue for the 2D GFF of the results obtained by Derrida \& Spohn \cite{derrida-spohn} and Bovier \& Kurkova \cite{bovier-kurkova1,bovier-kurkova2} 
for the branching Brownian motion and for GREM-type models.
In \cite{arguin-zindy}, such a result was proved for a non-hierarchical log-correlated Gaussian field constructed
from the multifractal random measure of Bacry \& Muzy \cite{bacry-muzy}, see also \cite{bouchaud-fyodorov} for a closely related model.
This type of result was conjectured by Carpentier \& Ledoussal \cite{carpentier-ledoussal}. 
We also remark that Theorem \ref{thm: overlap} shows that at low temperature two points sampled with the Gibbs measure have overlaps $0$ or $1$. 
This is consistent with the result of Ding \& Zeitouni \cite{ding-zeitouni} who showed that the extremal values of GFF are at distance from each other of order one or of order $N$. 

A general method to prove Poisson-Dirichlet statistics for the distribution of the overlaps from the one-step replica symmetry breaking was laid down in \cite{arguin-zindy}.
This connection is done via the (now fundamental) Ghirlanda-Guerra identities. 
Another equivalent approach would be using {\it stochastic stability} as developed in \cite{aizenman-contucci, arguin, arguin-chatterjee}.
The reader is referred to Section 2.3 of \cite{arguin-zindy} where the connection is explained in details for general Gaussian fields.
For the sake of conciseness, we simply state the consequence for the 2D GFF.

Consider the product measure ${\mathcal G}_{\beta,N}^{\times s}$ on $s$ {\it replicas} $(v_1,\dots,v_s)\in V_N^{\times s}$.
Let $F:[0,1]^{\frac{s(s-1)}{2}}\to \R$ be a continuous function.
Write  $F(q_{ll'})$ for the function evaluated at $q_{ll'}:=q(v_l,v_{l'})$, $l\neq l'$, for $(v_1,\dots ,v_s)\in  V_N^{\times s}$.
We write $\E {\mathcal G}_{\beta,N}^{\times s}\big(F(q_{ll'})\big)$ for the averaged expectation.
Recall that a {\it Poisson-Dirichlet variable} $\xi$ of parameter $\alpha$ is a random variable on the space of decreasing weights $\vec{s}=(s_1,s_2,\dots)$ with $1\geq s_1\geq s_2\geq \dots\geq 0$ and $\sum_{i}s_i\leq 1$ which has the same law as $\left(\eta_i/\sum_j\eta_j, i\in \N\right)_\downarrow$ where  $\downarrow$ stands for the decreasing rearrangement and $\eta=(\eta_i,i\in\N)$ are the atoms of a Poisson random measure on $(0,\infty)$ of intensity measure $s^{-\alpha-1} ~ ds$.

The theorem below is a direct consequence of the Theorem \ref{thm: overlap}, the differentiability of the free energy \eqref{eqn: free energy GFF} as well as 
Corollary 2.5 and Theorem 2.6 of \cite{arguin-zindy}.
\begin{theorem}
\label{thm: PD}
Let $\beta>\beta_c$ and $\xi=(\xi_k,k\in\N)$ be a Poisson-Dirichlet variable of parameter $\beta_c/\beta$.
Denote by $E$ the expectation with respect to $\xi$.
For any continuous function $F: [0,1]^{\frac{s(s-1)}{2}}\to \R$ of the overlaps of $s$ replicas:
$$
\lim_{N\to\infty} 
\e \left[  {\mathcal G}_{\beta, N}^{\times s} \left( F(q_{ll'}) \right)\right] 
=
E \left[  \sum_{k_1\in\N,...,k_s\in\N} \xi_{k_1}\dots \xi_{k_s} ~ F(\delta_{k_lk_{l'}}) \right].
$$
\end{theorem}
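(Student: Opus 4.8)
The plan is to obtain the full $s$-replica statistics from the single two-overlap distribution of Theorem~\ref{thm: overlap} by means of the Ghirlanda--Guerra identities, which is exactly the mechanism packaged in Corollary~2.5 and Theorem~2.6 of \cite{arguin-zindy}. The underlying principle is that the one-step replica symmetry breaking established in Theorem~\ref{thm: overlap} --- a pair of replicas has overlap~$0$ with asymptotic probability $\beta_c/\beta$ and overlap~$1$ with probability $1-\beta_c/\beta$ --- together with the Ghirlanda--Guerra identities rigidly determines the law of the entire overlap array, and this law is precisely the one generated by sampling replicas from a Poisson--Dirichlet$(\beta_c/\beta)$ family of weights.

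Writing $b_N:=\frac{1}{\pi}\log N^2$ for the variance scale and noting that by \eqref{eqn: q} the overlap satisfies $\E[\phi_v\phi_{v'}]=b_N\,q(v,v')$ exactly, the first step is to establish the Ghirlanda--Guerra identities. The starting point is Gaussian integration by parts applied to $\E\big[\mathcal{G}^{\times(s+1)}_{\beta,N}\big(b_N^{-1}\phi_{v_1}\,\psi(q_{1,s+1})\,f(q_{ll'})\big)\big]$, which rewrites this average as a combination of overlap moments weighted by the covariances $b_N\,q$. The second ingredient is the self-averaging of the free energy: the Gaussian Poincar\'e inequality gives $\mathrm{Var}(\log Z_N(\beta))=O(\log N)$ uniformly in $N$, so $b_N^{-1}\log Z_N$ concentrates; since the limiting free energy \eqref{eqn: free energy GFF} is differentiable for $\beta>\beta_c$, convexity of $\beta\mapsto\log Z_N(\beta)$ promotes this into concentration of the energy $b_N^{-1}\mathcal{G}_{\beta,N}(\phi)$ around a deterministic value. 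Feeding the self-averaging into the integration-by-parts formula cancels the diagonal contribution and yields, in the limit, the identity
\begin{equation*}
\lim_{N\to\infty}\E\big[\mathcal{G}^{\times(s+1)}_{\beta,N}\big(\psi(q_{1,s+1})\,f\big)\big]
=\tfrac{1}{s}\,\mu(\psi)\,\lim_{N\to\infty}\E\big[\mathcal{G}^{\times s}_{\beta,N}(f)\big]
+\tfrac{1}{s}\sum_{l=2}^{s}\lim_{N\to\infty}\E\big[\mathcal{G}^{\times s}_{\beta,N}\big(\psi(q_{1l})\,f\big)\big] ,
\end{equation*}
where $\mu=\frac{\beta_c}{\beta}\delta_0+(1-\frac{\beta_c}{\beta})\delta_1$ is the limiting law of a single overlap furnished by Theorem~\ref{thm: overlap}.

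Next I would run the reconstruction. Because $\mu$ is supported on $\{0,1\}$ and overlap~$1$ is an equivalence relation in the limit, the replicas organise into clusters with within-cluster overlap~$1$ and between-cluster overlap~$0$; feeding $\mu$ recursively into the above identity then determines every joint moment of the array. These moments are exactly those produced by sampling $s$ points independently from a fixed realisation of weights $\xi=(\xi_k)$, where a new replica joins an existing cluster $k$ with probability proportional to its current mass --- the predictive rule characterising a Poisson--Dirichlet$(\alpha)$ variable with $\alpha=\beta_c/\beta$ (consistently, $E[\sum_k\xi_k^2]=1-\alpha$ matches $\mu(\{1\})$). Identifying $q_{ll'}$ with $\delta_{k_lk_{l'}}$, where $k_l$ denotes the cluster of replica $l$, and approximating the continuous $F$ uniformly by polynomials in the $q_{ll'}$ converts the convergence of overlap moments into the claimed formula $E\big[\sum_{k_1,\dots,k_s}\xi_{k_1}\cdots\xi_{k_s}F(\delta_{k_lk_{l'}})\big]$.

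The main obstacle I anticipate is the rigorous derivation of the Ghirlanda--Guerra identities, namely the self-averaging of the energy together with the proof that the error terms in the integration-by-parts step vanish after division by $b_N\to\infty$. The delicate boundary effects that dominate the analysis of Theorem~\ref{thm: overlap} --- the failure of \eqref{eqn: variance}--\eqref{eqn: covariance} on $V_N\setminus V_N^\delta$ and the $O_N(1)$ corrections --- do not reappear at leading order here, since the overlap is defined directly as the normalized covariance and the Poincar\'e bound holds uniformly; the only residual point is to verify that the Gibbs mass escaping $V_N^\delta$ does not disturb the concentration of the energy, which follows from the localization of the extremes in the bulk established en route to Theorem~\ref{thm: overlap}.
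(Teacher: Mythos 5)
Your proposal follows essentially the same route as the paper: the paper proves Theorem \ref{thm: PD} by invoking Corollary 2.5 and Theorem 2.6 of \cite{arguin-zindy}, which is precisely the machinery you reconstruct --- Ghirlanda--Guerra identities obtained from Gaussian integration by parts plus self-averaging of the energy (via concentration of $\log Z_N$ and differentiability of the limiting free energy), combined with the one-step RSB of Theorem \ref{thm: overlap} to identify the overlap array with that of a Poisson--Dirichlet variable of parameter $\beta_c/\beta$. The argument is correct and matches the intended proof.
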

The above is one of the few rigorous results known on the Gibbs measure of log-correlated fields at low temperature. 
Theorem \ref{thm: PD} is a step closer to the conjecture of Duplantier, Rhodes, Sheffield \& Vargas (see Conjecture 11 in \cite{drsv} and Conjecture 6.3 in \cite{rhodes-vargas}) 
that the Gibbs measure, as a random probability measure on $V_N$, should be atomic in the limit with the size of the atoms being Poisson-Dirichlet. 
Theorem \ref{thm: PD} falls short of the full conjecture because only test-functions of the overlaps are considered.
Finally, it is expected that the Poisson-Dirichlet statistics emerging here is related to the Poissonian statistics of the thinned extrema of the 2D GFF proved by Biskup \& Louidor in \cite{biskup-louidor} based on the convergence of the maximum established by Bramson, Ding \& Zeitouni \cite{bramson-ding-zeitouni}.
To recover the Gibbs measure from the extremal process, some properties of the cluster of points near the maxima must be known.

The rest of this paper is dedicated to the proof of Theorem \ref{thm: overlap}.
In Section \ref{sect: generalized}, a generalized version of the GFF (whose variance is scale-dependent) is introduced. 
It is a kind of non-hierarchical GREM and is related to a model studied by Fyodorov \& Bouchaud in \cite{fyodorov-bouchaud}.
The proof of Theorem \ref{thm: overlap} is given in Section \ref{sect: overlap}. It relates the overlap distribution of the 2D GFF to the free energy of the generalized GFF.
The free energy of the generalized GFF needed in the proof is computed in Section \ref{sect: free energy}.


\section{The multiscale decomposition and a generalized GFF}
\label{sect: generalized}
In this section, we construct a Gaussian field from the GFF whose variance is scale-dependent.
The construction uses a multiscale decomposition along each vertex. 
The construction is analogous to a {\it Generalized Random Energy Model} of Derrida \cite{derrida}, but where correlations are non-hierarchical.
Here, only two different values of the variance will be needed though the construction can be directly generalized to any finite number of values.

Consider $0<t<1$.
We assume to simplify the notation that $N^{1-t}$ is an even integer and that $N^{t}$ divides $N$.
The case of general $t$'s can also be done by making trivial corrections along the construction.

For $v\in V_N$, we write $[v]_t$ for the unique box with $N^{1-t}$ points on each side and centered at $v$. 
If $[v]_t$ is not entirely contained in $V_N$, we take the convention that $[v]_t$ is the intersection of the square box with $V_N$.  For $t=1$, take $[v]_1=v$.
The $\sigma$-algebra  $\F_{[v]_t^c}$ is the $\sigma$-algebra generated by the field outside $[v]_t$. 
We define
$$
\phi_{[v]_t}:= \E\left[ \phi_v ~\big| ~ \F_{[v]_t^c} \right]=\E\left[ \phi_v ~\big| ~ \F_{\partial [v]_t}\right]\ ,
$$
where the second equality holds by the Markov property of the Gaussian free field, see Lemma \ref{lem: GFF}.
Clearly, for any $v\in V_N$, the random variable $\phi_{[v]_t}$ is Gaussian.
Moreover, by Lemma \ref{lem: GFF},
\begin{equation}
\label{eqn: field decomp}
\phi_{[v]_t}=\sum_{u\in \partial [v]_t} p_{t,v}(u) \phi_u \ ,
\end{equation}
where $p_{t,v}(u)=P_v(S_{\tau_{[v]_t}}=u)$ is the probability that a simple random walk starting at $v$ hits $u$ at the first exit time of $[v]_t$.

The following {\it multiscale decomposition} holds trivially
\begin{equation}
\phi_v= \phi_{[v]_t} + \left(\phi_v-  \phi_{[v]_t} \right)\ .
\end{equation}
The decomposition suggests the following scale-dependent perturbation of the field. 
For $0<\alpha<1$ and
$\vec{\sigma}=(\sigma_1, \sigma_2) \in \R_+^2$, consider for $v\in V_N$,
\begin{equation}
\label{eqn: psi}
\psi_v:=\sigma_1 \phi_{[v]_\alpha} + \sigma_2 \left(\phi_v-  \phi_{[v]_\alpha} \right)\ .
\end{equation}
The Gaussian field  $(\psi_v, v\in V_N)$ will be called the $(\alpha,\vec{\sigma})$-GFF on $V_N$. \\

To control the boundary effects, it is necessary to consider the field in a box slightly smaller than $V_N$.
 For $\rho \in (0,1)$, let
\begin{equation}
\label{eqn:Arho}
A_{N,\rho}:= \{v \in V_N : d_1(v,\partial V_N) \ge N^{1-\rho}\}\ ,
\end{equation}
 where $d_1(v,B):=\inf \{ \| v-u\| \, ; \, u \in B\}$ for any set $B \subset \Z^2.$
We always take $\rho<\alpha$ so that $[v]_\alpha$ is contained in $V_N$ for any $v\in A_{N,\rho}$.
We write ${\mathcal G}_{\beta, N,\rho}^{(\alpha,\vec{\sigma})}(\cdot)$ for the Gibbs measure of $(\alpha,\vec{\sigma})$-GFF restricted to $A_{N,\rho}$
\begin{equation*}
{\mathcal G}_{\beta, N,\rho}^{(\alpha,\vec{\sigma})}(\{v\}):=\frac{\ee^{\beta \psi_v}}{Z_{N,\rho}^{(\alpha,\vec{\sigma})}(\beta)}, \qquad v \in  A_{N,\rho} ,
\end{equation*}
where $Z_{N,\rho}^{(\alpha,\vec{\sigma})}(\beta):= \sum_{v\in A_{N,\rho}} \ee^{\beta \psi_v}\ .$

The associated free energy is given by
$$
f_{N,\rho}^{(\alpha,\vec{\sigma})}(\beta):= \frac{1}{\log N^2} \log Z_{N,\rho}^{(\alpha,\vec{\sigma})}(\beta), \qquad \forall \beta >0.
$$
(Note that $\log \#A_{N,\rho}=(1+o_N(1))\log N^2$.)
Its $L_1$-limit is a central quantity needed to apply Bovier-Kurkova technique. 
This limit is better expressed in terms of the free energy of the REM model consisting of $N^2$ i.i.d. Gaussian variables of variance $\frac{\sigma^2}{\pi} \log N^2$:
\begin{equation}
\label{eqn: rem free}
f(\beta; \sigma^2):=
\begin{cases}
1+\frac{\beta^2 \sigma^2}{2 \pi}, &\text{ if $\beta\leq \beta_c(\sigma^2):= \frac{\sqrt{2 \pi}}{{\sigma}},$}\\
\sqrt{\frac{2}{\pi}}\sigma \beta,  &\text{ if $\beta\geq \beta_c(\sigma^2).$}
\end{cases}
\end{equation}

\begin{theorem}
\label{thm:freeenergyperturbed}
Fix $\alpha \in (0,1)$ and $\vec{\sigma}= (\sigma_1,\sigma_2) \in \R_+^2$ and let  
$V_{12}:=\sigma_1^2\alpha+\sigma_2^2(1-\alpha)$. Then, for any $\rho < \alpha,$ and for all $\beta>0$
\begin{equation}
\label{eqn: free energy pert}
\lim_{N\to\infty}f_{N,\rho}^{(\alpha,\vec{\sigma})}(\beta)=f^{(\alpha,\vec{\sigma})}(\beta):=
\begin{cases}
 f(\beta; V_{12}), \qquad &\text{ if $\sigma_1\leq \sigma_2$,}\\
 \alpha f(\beta; \sigma_1^2) + (1-\alpha)f(\beta; \sigma_2^2),  \qquad &\text{ if $\sigma_1\geq \sigma_2$,}
\end{cases}
\end{equation}
where the convergence holds almost surely and in $L^1$.
\end{theorem}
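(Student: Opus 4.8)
\emph{Proof strategy.} The plan is to pass from the random free energy to its expectation, to read off the covariance structure of the $(\alpha,\vec\sigma)$-GFF from the Markov decomposition \eqref{eqn: psi}, and then to compute the limiting mean free energy by comparison with an exactly solvable two-level GREM whose free energy is classical. The first reduction is a concentration step: by the Gaussian concentration inequality for the free energy one has $\mathrm{Var}\big(\log Z_{N,\rho}^{(\alpha,\vec\sigma)}(\beta)\big)\le \beta^2\max_{v}\mathrm{Var}(\psi_v)=O(\log N)$, using that $\sum_v \mathcal G_{\beta,N,\rho}^{(\alpha,\vec\sigma)}(\{v\})=1$ and that $\mathrm{Var}(\psi_v)=V_{12}\,\tfrac1\pi\log N^2+O_N(1)$. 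Hence $f_{N,\rho}^{(\alpha,\vec\sigma)}(\beta)$ concentrates around its mean with fluctuations $o_N(1)$; the $L^1$ statement then follows once the mean is identified, and the almost sure statement follows from a standard Borel--Cantelli argument along a geometric subsequence.

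The key input is the covariance of $\psi$. The independence of the coarse field $\phi_{[v]_\alpha}$ and the fine field $\phi_v-\phi_{[v]_\alpha}$ (Lemma \ref{lem: GFF} and the Markov property), combined with the Green function estimates \eqref{eqn: covariance} and Lemma \ref{lem: green estimate}, shows that for $v,v'\in A_{N,\rho}$ the covariance $\E[\psi_v\psi_{v'}]$ behaves, up to $O_N(1)$, like a two-slope log-correlated kernel: it is $\sigma_1^2\,\tfrac1\pi\log\tfrac{N^2}{\|v-v'\|^2}$ when $N^{1-\alpha}\le\|v-v'\|\le N$, and $\sigma_1^2\alpha\,\tfrac1\pi\log N^2+\sigma_2^2\,\tfrac1\pi\log\tfrac{N^{2(1-\alpha)}}{\|v-v'\|^2}$ when $\|v-v'\|\le N^{1-\alpha}$; the two expressions agree at the crossover $\|v-v'\|=N^{1-\alpha}$. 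This is exactly the covariance of a non-hierarchical GREM with two variance slopes, $\sigma_1^2$ for scales above $N^{1-\alpha}$ and $\sigma_2^2$ for scales below. To compute the mean free energy I would sandwich $\psi$ between two exactly solvable fields $Y^{\pm}$ by Kahane's Gaussian comparison inequality: $Y^{\pm}$ are genuine two-level GREMs with the same per-site variance as $\psi$, consisting of an independent coarse Gaussian of variance $\sigma_1^2\alpha\,\tfrac1\pi\log N^2$ attached to each $N^{1-\alpha}$-box, together with, inside each box, an independent family of Gaussians of variance $\sigma_2^2(1-\alpha)\,\tfrac1\pi\log N^2$; their step-function covariances bound $\E[\psi_v\psi_{v'}]$ from above and below up to additive $O_N(1)$ errors. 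Kahane's inequality then pins $\tfrac1{\log N^2}\E[\log Z_{N,\rho}^{(\alpha,\vec\sigma)}]$ between the free energies of $Y^{+}$ and $Y^{-}$, which share a common limit.

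The two cases emerge from the classical computation of that limit via Derrida's concavification. Plotting cumulative entropy against cumulative variance gives the points $(0,0)$, $(\alpha,\sigma_1^2\alpha)$ and $(1,V_{12})$, and the free energy is governed by the upper concave envelope of these points. The middle point lies above the segment joining $(0,0)$ to $(1,V_{12})$ precisely when $\sigma_1^2>\sigma_2^2$. Thus for $\sigma_1\ge\sigma_2$ both levels survive and the free energy factorizes as the sum of the two REM free energies, $\alpha f(\beta;\sigma_1^2)+(1-\alpha)f(\beta;\sigma_2^2)$, whereas for $\sigma_1\le\sigma_2$ the levels coalesce into a single REM of total variance $V_{12}$, giving $f(\beta;V_{12})$. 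In the hierarchical case $\sigma_1\ge\sigma_2$ the same value is also produced directly, and more transparently, by conditioning on the coarse field: inside a box the fine field is a Dirichlet GFF on a box of side $N^{1-\alpha}$, whose partition function contributes $(1-\alpha)f(\beta;\sigma_2^2)\log N^2$ by \eqref{eqn: free energy GFF} applied at scale $N^{1-\alpha}$, while the sum over the nearly independent coarse values contributes $\alpha f(\beta;\sigma_1^2)\log N^2$; the matching upper bound is a first-moment estimate split according to the coarse value and controlled by the counting of high points underlying \eqref{eqn: daviaud}.

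The main obstacle, and the reason for restricting to $A_{N,\rho}$, is that the construction is genuinely non-hierarchical: the coarse box $[v]_\alpha$ moves with $v$, so $\phi_{[v]_\alpha}$ is neither constant on an $N^{1-\alpha}$-box nor exactly independent across distinct boxes, and near $\partial V_N$ the variance is reduced so that \eqref{eqn: variance}--\eqref{eqn: covariance} fail. The bulk of the work is therefore to show that these defects perturb the covariance only in the transition region $\|v-v'\|\approx N^{1-\alpha}$ and only by additive $O_N(1)$ terms, so that the step-function fields $Y^{\pm}$ genuinely bracket $\psi$ and the contribution of $V_N\setminus A_{N,\rho}$ is negligible on the exponential scale $\log N^2$. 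Controlling the cross-covariances between the coarse and fine parts of overlapping boxes, uniformly in $v$, through the Green function estimates of Lemma \ref{lem: green estimate}, is the technical heart of the argument.
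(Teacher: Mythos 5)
Your architecture --- concentration onto the mean, a two-sided Gaussian comparison with homogeneous two-level GREMs, then Derrida's concavification --- matches the spirit of the paper's upper bound, but it hinges on a quantitative claim that is false and that the actual proof is specifically built to avoid. You assert that on all of $A_{N,\rho}$ one has $\mathrm{Var}(\psi_v)=V_{12}\tfrac1\pi\log N^2+O_N(1)$ and that $\E[\psi_v\psi_{v'}]$ agrees with a two-slope kernel up to additive $O_N(1)$, with defects confined to the crossover $\|v-v'\|\approx N^{1-\alpha}$. Near the inner boundary of $A_{N,\rho}$ this fails: for $v$ at distance $N^{1-\rho}$ from $\partial V_N$, Lemma \ref{lem: green estimate} gives $\E[\phi_v^2]=\tfrac{2(1-\rho)}{\pi}\log N+O(1)$, hence $\mathrm{Var}(\phi_{[v]_\alpha})=\tfrac{2(\alpha-\rho)}{\pi}\log N+O(1)$ and $\mathrm{Var}(\psi_v)$ misses $V_{12}\tfrac{1}{\pi}\log N^2$ by $\sigma_1^2\tfrac{2\rho}{\pi}\log N$; likewise, for two such points with $\|v-v'\|=N^{1-\rho}$ the true covariance is $O(1)$ while your kernel predicts $\sigma_1^2\tfrac{2\rho}{\pi}\log N$. (Compare \eqref{eqn: q in Arho}, which pins the overlap down only within a window of width $\rho$.) These errors are of order $\rho\log N$, occur for pairs at all mutual distances between $N^{1-\rho}$ and $N$, and for fixed $\rho$ they break both the equal-variance hypothesis of Kahane's inequality and the claim that $Y^{+}$ and $Y^{-}$ share a common limit. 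The paper's upper bound is therefore done in two steps: first a comparison with a \emph{non-homogeneous} hierarchical field $\widetilde\psi$ whose first-level variances $\sigma_1^2\E[\phi_{\widetilde B}^2]+C$ genuinely vary with the distance of $B$ to the boundary (so that variances match exactly and Lemma \ref{lem: correlations} supplies the covariance domination), and then a separate monotonicity lemma --- the free energy of a hierarchical field is increasing in each first-level variance --- to pass to the homogeneous GREM with the maximal first-level variance $\tfrac\alpha\pi\log N^2$. Your proposal is missing this second step entirely.

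The lower bound is the weaker half of your proposal for a structural reason: Kahane's inequality bounds $\E\log Z$ from above when correlations are decreased at fixed variances, so to run it in the other direction you would need a tractable field dominating $\E[\psi_v\psi_{v'}]$ from above with exactly matching, non-constant variances, and the same boundary defect prevents this from being a homogeneous GREM with the correct exponents. The paper does not use comparison here at all: it drops to the deep bulk via $Z_{N,\rho}^{(\alpha,\vec{\sigma})}(\beta)\ge\sum_{v\in V_N^\delta}e^{\beta\psi_v}$, proves a Daviaud-type recursive lower bound on the number of $\gamma$-high points of $\psi$ in two stages (scales $(0,\alpha]$ then $(\alpha,1]$, Lemmas \ref{lem:recurrence} and \ref{lem:init}), and concludes by Abel summation and Laplace's method. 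This also covers $\sigma_1\le\sigma_2$, where your level-by-level conditioning heuristic does not yield the correct value $f(\beta;V_{12})$ because the optimal high-point strategy is not hierarchical there.
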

Note that the limit does not depend on $\rho$.


\section{Proof of Theorem \ref{thm: overlap}}
\label{sect: overlap}

\subsection{The Gibbs measure close to the boundary}

The first step in the proof of Theorem \ref{thm: overlap} is to show that points close to the boundary do not carry any weight in the Gibbs measure of the GFF in $V_N$.
The result would not necessarily hold if we considered instead the outside of $V_N^\delta$ which is much larger than the outside of $A_{N,\rho}$.
\begin{lemma}
\label{lem: boundary}
For any $\rho>0$, 
\begin{equation}
\lim_{N\to\infty}\mathcal G_{\beta, N}(A^c_{N,\rho}) =0, \qquad \text{ in $\p$-probability.}
\end{equation}
\end{lemma}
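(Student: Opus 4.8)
The plan is to show that, with probability tending to one, the near-boundary partition function $\sum_{v\in A^c_{N,\rho}}\ee^{\beta\phi_v}$ is of exponentially smaller order (in $\log N^2$) than the full partition function $Z_N(\beta)$, the gain coming entirely from the fact that the variance of $\phi_v$ degrades as $v$ approaches $\partial V_N$. First I would fix the lower bound on the denominator. For $N$ large, $V_N^\delta\subset A_{N,\rho}$ is disjoint from $A^c_{N,\rho}$, so $Z_N(\beta)\ge\sum_{v\in V_N^\delta}\ee^{\beta\phi_v}$; by \eqref{eqn: free energy GFF} (whose lower bound is exactly the Laplace evaluation of \eqref{eqn: daviaud} on $V_N^\delta$) the right-hand side is at least $\ee^{(f(\beta)-\eps)\log N^2}$ with probability tending to $1$, for every $\eps>0$.

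The core is an upper bound on the numerator via a decomposition according to the distance to the boundary. I would partition $A^c_{N,\rho}$ into $O(\log N)$ shells $S_u:=\{v:d_1(v,\partial V_N)\asymp N^u\}$ with $u$ on a fine grid of $[0,1-\rho]$. On $S_u$ there are at most $O(N^{1+u})$ points (perimeter $N$ times width $N^u$), and by the Green function estimate (Lemma \ref{lem: green estimate}) each has variance at most $\frac{u}{\pi}\log N^2+O_N(1)$. A first-moment bound on the number of high points inside $S_u$, exactly as in Daviaud's computation leading to \eqref{eqn: daviaud}, shows that with high probability $S_u$ carries no point with $\phi_v\ge(s_0(u)+\eps)\log N^2$, where $s_0(u):=\sqrt{u(1+u)/\pi}$, and that the number of points above level $s\log N^2$ is at most $\exp\big(\big(\tfrac{1+u}{2}-\tfrac{\pi s^2}{2u}\big)^{+}\log N^2\big)$. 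Decomposing $\sum_{v\in S_u}\ee^{\beta\phi_v}$ over these levels and optimizing yields, with high probability,
\[
\frac{1}{\log N^2}\log\sum_{v\in S_u}\ee^{\beta\phi_v}\le f^{\mathrm{sh}}(u)+O(\eps),\qquad
f^{\mathrm{sh}}(u):=\begin{cases}\frac{1+u}{2}+\frac{\beta^2u}{2\pi}, & \beta\le\sqrt{\pi(1+u)/u},\\[1mm] \beta\sqrt{u(1+u)/\pi}, & \beta\ge\sqrt{\pi(1+u)/u},\end{cases}
\]
and summing over the $O(\log N)$ shells costs only an $o_N(1)$ term in the free energy.

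It then remains the elementary deterministic check that $\sup_{0\le u\le1-\rho}f^{\mathrm{sh}}(u)<f(\beta)=\beta\sqrt{2/\pi}$ for every $\beta>\beta_c$ and every $\rho>0$. Writing $u^{\ast}:=\pi/(\beta^2-\pi)$, one has $u^{\ast}<1$ precisely when $\beta>\beta_c$. On the low-temperature range $u\ge u^{\ast}$, $f^{\mathrm{sh}}(u)=\beta\sqrt{u(1+u)/\pi}$ is increasing and, since $u\le1-\rho<1$, strictly below $\beta\sqrt{2/\pi}$; on $u<u^{\ast}$ the increasing high-temperature branch is maximal at $u^{\ast}$, where it matches that value by continuity. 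In the remaining case $u^{\ast}>1-\rho$ all shells are high-temperature, and since the branch $\frac{1+u}{2}+\frac{\beta^2u}{2\pi}$ is increasing in $u$ one checks directly, using that it is convex in $\beta$ with value $\frac{3((1-\rho)-1)}{2}<0$ relative to $\beta\sqrt{2/\pi}$ at $\beta=\beta_c$, that it stays below $\beta\sqrt{2/\pi}$ at $u=1-\rho$. In all cases there is a gap $\delta_\rho>0$.

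Combining the two bounds and choosing $\eps<\delta_\rho/4$ gives $\mathcal G_{\beta,N}(A^c_{N,\rho})\le\ee^{-(\delta_\rho/2)\log N^2}\to0$ with high probability, hence in $\p$-probability. The step I expect to be the main obstacle is precisely the numerator bound: the plain annealed (first-moment) free energy of the deep shells overestimates the truth and in fact exceeds $f(\beta)$ for large $\beta$, so one cannot simply bound the numerator by $\E\big[\sum_{A^c_{N,\rho}}\ee^{\beta\phi_v}\big]$. It is essential to use the truncated first moment—counting high points and invoking the fact that none exist above level $s_0(u)$—in order to recover the correct quenched free energy $f^{\mathrm{sh}}(u)$ of each shell, together with a uniform near-boundary control of the diagonal Green function.
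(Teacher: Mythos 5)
Your proof is correct in substance, but it reaches the key estimate by a genuinely different route than the paper. Both arguments share the same skeleton: lower-bound the denominator $Z_N(\beta)$ by $\ee^{(f(\beta)-\eps)\log N^2}$ via \eqref{eqn: free energy GFF}, and show the near-boundary partition function has free energy strictly below $f(\beta)$ by a $\rho$-dependent gap. For the latter, the paper does \emph{not} exploit the decay of the variance near $\partial V_N$ at all: it first boosts every variance up to the maximal value $\frac1\pi\log N^2+O_N(1)$ by adding independent Gaussians (Jensen's inequality makes this an upper bound on the expected log-partition function), then applies the Slepian-type comparison of Lemma~\ref{lem: slepian} to replace the field by i.i.d.\ variables, so that the entire gain comes from the reduced cardinality $\#A_{N,\rho}^c\asymp 4N^{2-\rho}$; concentration of the Lipschitz free energy then converts the expectation bound into a probability bound. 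You instead slice $A_{N,\rho}^c$ into shells $S_u$ at distance $\asymp N^u$ from the boundary and exploit \emph{both} the reduced count $N^{1+u}$ and the reduced variance $\frac{u}{\pi}\log N^2$, computing the quenched shell free energy $f^{\mathrm{sh}}(u)$ by a truncated first-moment (Daviaud-type upper-bound) argument; your closing observation that the plain annealed bound fails for large $\beta$ is correct and is exactly why the truncation is needed. Your route is more elementary in that it avoids the interpolation/comparison lemma, and it gives sharper information (either the count reduction or the variance reduction alone would in fact already close the argument), at the price of more bookkeeping. Two small points to tighten: the uniform bound $\E[\phi_v^2]\le \frac{2}{\pi}\log d_1(v,\partial V_N)+O_N(1)$ is standard but does not follow from the bare statement of Lemma~\ref{lem: green estimate} --- you need an exit-distribution (harmonic measure) estimate showing $E_v[\log\|v-S_{\tau_{V_N}}\|]=\log d_1(v,\partial V_N)+O(1)$; and in the final deterministic check of the case $u^*>1-\rho$, convexity of $\beta\mapsto \frac{2-\rho}{2}+\frac{\beta^2(1-\rho)}{2\pi}-\beta\sqrt{2/\pi}$ plus negativity at the single point $\beta=\beta_c$ is not enough --- you should also verify negativity at the other endpoint $\beta=\sqrt{\pi(2-\rho)/(1-\rho)}$ (it holds, since $(1-\rho)(2-\rho)<2$), after which convexity finishes the interval.
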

Before turning to the proof, we claim that the lemma implies that, for any $r\in [0,1]$ and $\rho\in(0,1)$,
\begin{equation}
\label{eqn: x rho}
\lim_{N\to\infty}\big|x_{\beta,N} (r)-x_{\beta,N,\rho} (r)\big|=0\ ,
\end{equation}
where 
\begin{equation}
 x_{\beta,N,\rho}(r ):= \E\mathcal G_{\beta,N,\rho}^{\times 2}\{q(v,v')\leq r\}, \qquad \text{ $r\in [0,1]$}\ .
\end{equation}
is the two-overlap distribution of the Gibbs measure of the GFF $(\phi_v,v\in V_N)$ restricted to $A_{N,\rho}$
\begin{equation*}
{\mathcal G}_{\beta, N, \rho}(\{v\}):=\frac{\ee^{\beta \phi_v}}{Z_{N,\rho}(\beta)}, \qquad v \in  A_{N,\rho} ,
\end{equation*}
for $Z_{N,\rho}(\beta):= \sum_{v\in A_{N,\rho}} \ee^{\beta \phi_v}$.
Indeed, introducing an auxiliary term
$$
\begin{aligned}
\big|x_{\beta,N} (r)-x_{\beta,N,\rho} (r)\big|&\leq
\big| \E\mathcal G_{\beta,N}^{\times 2}\big\{q(v,v')\leq r\big\} - \E\mathcal G_{\beta,N}^{\times 2}\big\{q(v,v')\leq r; v,v'\in A_{N,\rho}\big\}\big|\\
&+
\big|  \E\mathcal G_{\beta,N}^{\times 2}\big\{q(v,v')\leq r; v,v'\in A_{N,\rho}\big\}-\E\mathcal G_{\beta,N,\rho}^{\times 2}\big\{q(v,v')\leq r\big\} \big|\ .
\end{aligned}
$$
The first term is smaller than $2\ \E\mathcal G_{\beta,N}(A_{N,\rho}^c)$.
The second term equals
$$
\begin{aligned}
&\E\mathcal G_{\beta,N,\rho}^{\times 2}\big\{q(v,v')\leq r\big\}- \E\mathcal G_{\beta,N}^{\times 2}\big\{q(v,v')\leq r; v,v'\in A_{N,\rho}\big\}\\
&\hspace{4cm}=
\E\left[\frac{\mathcal G_{\beta,N}^{\times 2}\big\{q(v,v')\leq r; v,v'\in A_{N,\rho}\big\}}{\mathcal G_{\beta,N}^{\times 2}\big\{ v,v'\in A_{N,\rho}\big\}}
\left(1-\mathcal G_{\beta,N}^{\times 2}\big\{ v,v'\in A_{N,\rho} \big\} \right)\right],
\end{aligned}
$$
which is also smaller than $2\ \E\mathcal G_{\beta,N}(A_{N,\rho}^c)$. 
Lemma \ref{lem: boundary} then implies \eqref{eqn: x rho} as claimed.

\begin{proof}[Proof of Lemma \ref{lem: boundary}]
Let $\epsilon>0$ and $\lambda>0$. The probability can be split as follows
$$
\begin{aligned}
\p\left(\mathcal G_{\beta, N}(A^c_{N,\rho}) > \epsilon \right)
&\leq \p\left(\mathcal G_{\beta, N}(A^c_{N,\rho}) > \epsilon, \left|\frac{1}{\log N^2}\log Z_N(\beta) - f(\beta)\right|\leq \lambda \right)\\\
&+
 \p\left( \left|\frac{1}{\log N^2}\log Z_N(\beta) - f(\beta) \right|> \lambda \right)\ ,
\end{aligned}
$$
where $f(\beta)$ is defined in \eqref{eqn: free energy GFF}.
The second term converges to zero by \eqref{eqn: free energy GFF}. 
The first term is smaller than
\begin{equation}
\label{eqn: self-overlap}
\begin{aligned}
  \p\left( \frac{1}{\log N^2}\log \sum_{v\in A^c_{N,\rho}} \exp\beta \phi_v > f(\beta)-\lambda+\frac{\log \epsilon}{\log N^2}\right) .
  \end{aligned}
\end{equation}
Since the free energy is a Lipschitz function of the variables $\phi_v$, see e.g. Theorem 2.2.4 in \cite{talagrand}, the free energy self-averages, that is for any $t>0$
 $$
 \lim_{N\to\infty}\p\left( \left|\frac{1}{\log N^2}\log \sum_{v\in A_{N,\rho}^c} \exp\beta \phi_v -\frac{1}{\log N^2} \E\left[\log \sum_{v\in A_{N,\rho}^c} \exp\beta \phi_v\right]\right|\geq t\right)= 0\ .
 $$
To conclude the proof, it remains to show that for some $C<1$ (independent of $N$ but dependent on $\rho$)
 \begin{equation}
 \label{eqn: expect bound}
 \limsup_{N\to\infty} \frac{1}{\log N^2} \E\left[\log \sum_{v\in A_{N,\rho}^c} \exp\beta \phi_v\right] < C f(\beta).
 \end{equation}
Note that by Lemma \ref{lem: GFF}, the maximal variance of $\phi_v$ in $V_N$ is $ \frac{1}{\pi}\log N^{2}+O_N(1)$.
 Pick $(g_v,v\in A_{N,\rho}^c)$ independent centered Gaussians (and independent of $(\phi_v)_{v \in A_{N,\rho}^c}$) with variance given by $\E[g_v^2]=  \frac{1}{\pi}\log N^{2}+O_N(1) - \E[\phi_v^2]$.
 Jensen's inequality applied to the Gibbs measure implies that $\E[\log \sum_{v\in A_{N,\rho}^c} \exp \beta (\phi_v+g_v)]\geq \E[\log \sum_{v\in A_{N,\rho}^c} \exp \beta \phi_v]$.
 Moreover, by a standard comparison argument (see Lemma \ref{lem: slepian} in the Appendix), $\E[\log \sum_{v\in A_{N,\rho}^c} \exp \beta (\phi_v+g_v)]$ is smaller than the expectation for i.i.d.~variables with identical variances. The two last observations imply that
 $$
  \frac{1}{\log N^2} \E\left[\log \sum_{v\in A_{N,\rho}^c} \exp\beta \phi_v\right] \leq  \frac{1}{\log N^2} \E\left[\log \sum_{v\in A_{N,\rho}^c}\exp\beta \widetilde\phi_{ v} \right],
 $$
 where $(\widetilde \phi_v, v\in A_{N,\rho}^c)$ are i.i.d.~centered Gaussians of variance $\frac{1}{\pi}\log N^{2}+O_N(1)$.
  Since $\#A_{N,\rho}^c=N^2-|A_{N,\rho}|= 4N^{2-\rho}(1+o_N(1))$, the free energy of these i.i.d.~Gaussians in the limit $N\to\infty$ is given by \eqref{eqn: rem free}
 $$
 \lim_{N\to\infty} \frac{1}{\log 4N^{2-\rho}}\E\left[\log \sum_{ v\in A_{N,\rho}^{ c}} \exp\beta \widetilde\phi_{ v}\right] = 
 \begin{cases}
 1+ \frac{\beta^2}{2\pi}\left(1-\frac{\rho}{2}\right){ ^{-1}}, & \text{ $\beta< \sqrt{2\pi}\left(1-\frac{\rho}{2}\right)^{1/2}$},\\
 \sqrt{\frac{2}{\pi}} \left(1-\frac{\rho}{2}\right)^{-1/2}  \beta,  &\text{ $\beta\geq  \sqrt{2\pi}\left(1-\frac{\rho}{2}\right)^{1/2}$}\ .
 \end{cases}
 $$
 The last two equations then imply
 $$
  \limsup_{N\to\infty} \frac{1}{\log N^2} \E\left[\log \sum_{v\in A_{N,\rho}^{{ c}}} \exp\beta \phi_v\right] \leq 
   \begin{cases}
 \left(1-\frac{\rho}{2}\right) + \frac{\beta^2}{2\pi}, & \text{ $\beta< \sqrt{2\pi}\left(1-\frac{\rho}{2}\right)^{1/2}$},\\
 \sqrt{\frac{2}{\pi}} \left(1-\frac{\rho}{2}\right)^{1/2}  \beta,  &\text{ $\beta\geq  \sqrt{2\pi}\left(1-\frac{\rho}{2}\right)^{1/2}$}\ .
 \end{cases}
   $$
 It is then straightforward to check that, for every $\beta$, the right side is strictly smaller than $f(\beta)$ as claimed.
 \end{proof}

\subsection{An adaptation of the Bovier-Kurkova technique}
Theorem \ref{thm: overlap} follows from Equation \eqref{eqn: x rho} and
\begin{proposition}
For $\beta >\beta_c= \sqrt{2\pi}$,
$$
\lim_{\rho \to 0}\lim_{N\to\infty} x_{\beta,N,\rho} (r) 
=
\begin{cases}
\frac{\beta_c}{\beta}, &\text{ for $0 \le r <1$,}\\
1, &\text{ for $r=1$.}
\end{cases}
$$
\end{proposition}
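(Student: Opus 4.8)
The plan is to carry out the Bovier--Kurkova scheme, converting the free energy of the $(\alpha,\vec{\sigma})$-GFF from Theorem~\ref{thm:freeenergyperturbed} into the two-overlap distribution. By \eqref{eqn: x rho} it suffices to work with the restricted measure $\mathcal{G}_{\beta,N,\rho}$ for a fixed small $\rho$; for each such $\rho$ the construction and the estimates below are valid for $\alpha\in(\rho,1)$, and the range of admissible $\alpha$ opens up to all of $(0,1)$ only as $\rho\to0$. The key point is that, for every fixed $\alpha$, the quantity $\E\mathcal{G}_{\beta,N,\rho}^{\times2}[\min(q,\alpha)]$ can be read off from the $\sigma_1$-derivative of the perturbed free energy at $\vec{\sigma}=(1,1)$, where $\psi_v=\phi_v$; the family of these identities indexed by $\alpha$ then pins down the whole limiting overlap law.

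First I would set up the Gaussian integration by parts. Writing $\chi_v:=\phi_{[v]_\alpha}$ for the coarse field and $\eta_v:=\phi_v-\phi_{[v]_\alpha}$ for the fine field, so that $\psi_v=\sigma_1\chi_v+\sigma_2\eta_v$ and $\partial_{\sigma_1}\psi_v=\chi_v$, differentiation of $f^{(\alpha,\vec{\sigma})}_{N,\rho}$ followed by integration by parts in the Gaussian families $\{\chi_u\},\{\eta_u\}$ --- using $\partial_{\psi_u}\mathcal{G}(\{v\})=\beta\,\mathcal{G}(\{v\})(\indic_{u=v}-\mathcal{G}(\{u\}))$ --- gives, at $\vec{\sigma}=(1,1)$,
\[ \E\big[\partial_{\sigma_1}f^{(\alpha,\vec{\sigma})}_{N,\rho}(\beta)\big]\Big|_{\vec{\sigma}=(1,1)}=\frac{\beta^2}{\log N^2}\sum_{v,u\in A_{N,\rho}}\E[\chi_v\chi_u]\,\E\big[\mathcal{G}(\{v\})(\indic_{u=v}-\mathcal{G}(\{u\}))\big]+o(1), \]
where the $o(1)$ collects the coarse--fine cross terms $\sum_u\E[\chi_v\eta_u]\,\E[\cdots]$, which vanish on the diagonal ($\E[\chi_v\eta_v]=0$) and are $O(1/\log N^2)$ off it as soon as $\E[\chi_v\eta_u]=O(1)$ uniformly. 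Inserting the scale-decomposition estimate $\E[\chi_v\chi_u]=\tfrac1\pi\log N^2\,\min(q(v,u),\alpha)+O(1)$ --- the coarse field only sees scales above $N^{1-\alpha}$, so its covariance saturates at the level $\alpha$ --- and using $\sum_v\mathcal{G}(\{v\})=1$, the diagonal contributes $\alpha$ and the off-diagonal $\E\mathcal{G}^{\times2}_{\beta,N,\rho}[\min(q,\alpha)]$, so that
\[ \E\big[\partial_{\sigma_1}f^{(\alpha,\vec{\sigma})}_{N,\rho}(\beta)\big]\Big|_{\vec{\sigma}=(1,1)}=\frac{\beta^2}{\pi}\Big(\alpha-\E\mathcal{G}^{\times2}_{\beta,N,\rho}[\min(q,\alpha)]\Big)+o(1). \]

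Next I would evaluate the left side through Theorem~\ref{thm:freeenergyperturbed}. At $\vec{\sigma}=(1,1)$ one has $V_{12}=1$, and from \eqref{eqn: rem free} one computes $\partial_{\sigma^2}f(\beta;\sigma^2)\big|_{\sigma^2=1}=\beta/\beta_c$ for $\beta>\beta_c$; a direct check shows that both branches of \eqref{eqn: free energy pert} have the same one-sided $\sigma_1$-derivative $2\alpha\beta/\beta_c$ at $(1,1)$, so $f^{(\alpha,\vec{\sigma})}$ is differentiable there. Since $\sigma_1\mapsto f^{(\alpha,\vec{\sigma})}_{N,\rho}$ is convex (as $\log Z$ is convex in the $\psi_v$) and converges to this limit, the derivatives converge at the point of differentiability; combined with $\E\,\partial_{\sigma_1}=\partial_{\sigma_1}\E$ this yields
\[ \lim_{N\to\infty}\E\mathcal{G}^{\times2}_{\beta,N,\rho}[\min(q,\alpha)]=\alpha\Big(1-\frac{\beta_c}{\beta}\Big),\qquad \alpha\in(\rho,1). \]

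Finally I would extract the law. For fixed $\rho$, any subsequential limit $\mu_\rho$ of the law of $q$ under $\E\mathcal{G}^{\times2}_{\beta,N,\rho}$ satisfies $\int_0^\alpha\mu_\rho((t,1])\,dt=\int\min(q,\alpha)\,d\mu_\rho=\alpha(1-\beta_c/\beta)$ for all $\alpha\in(\rho,1)$; differentiating in $\alpha$ forces $\mu_\rho((\alpha,1])\equiv1-\beta_c/\beta$ on $(\rho,1)$, so $\mu_\rho$ has an atom $1-\beta_c/\beta$ at $1$, no mass on $(\rho,1)$, and residual mass $\beta_c/\beta$ on $[0,\rho]$. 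Hence $\lim_N x_{\beta,N,\rho}(r)=\beta_c/\beta$ for $r\in(\rho,1)$ and $=1$ for $r=1$; letting $\rho\to0$ removes all mass from $(0,1)$, collapsing the residual onto $\{0\}$ and giving $\tfrac{\beta_c}{\beta}\delta_0+(1-\tfrac{\beta_c}{\beta})\delta_1$, which is the Proposition. I expect the main obstacle to be the covariance estimates feeding the integration by parts: proving $\E[\phi_{[v]_\alpha}\phi_{[u]_\alpha}]=\tfrac1\pi\log N^2\min(q,\alpha)+O(1)$ together with the uniform $O(1)$ bound on the coarse--fine cross covariances requires control of the hitting distributions of the boxes $[v]_\alpha$ that remains valid uniformly up to the inner boundary of $A_{N,\rho}$. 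This is precisely where the boundary effects --- absent in the stationary log-correlated case --- intervene, and is the reason for working on $A_{N,\rho}$ and sending $\rho\to0$ only at the very end.
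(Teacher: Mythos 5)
Your proposal is correct in structure and implements the Bovier--Kurkova step in a genuinely different (dual) way from the paper. The paper perturbs the \emph{fine} field, taking $\vec{\sigma}=(1,1+u)$ and differentiating in $u$ at $0$; since $\E[(\phi_v-\phi_{[v]_\alpha})\phi_u]=0$ for $u\notin[v]_\alpha$, the integration by parts localizes to the box $[v]_\alpha$ and produces the identity \eqref{eqn: BK2}, whose second term carries the geometric indicator $\{v'\in[v]_\alpha\}$. The price is that this indicator must then be reconciled with the overlap event $\{q(v,v')\geq\alpha\}$ appearing in \eqref{eqn: BK1}, via the two-sided bound \eqref{eqn: q in Arho}; the mismatch lives on $\{q\in[\alpha-\varepsilon',\alpha+\varepsilon']\}$ and is killed only by choosing $\alpha$ to be a non-atom of the limiting law $x_\beta$. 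You instead perturb the \emph{coarse} field and differentiate in $\sigma_1$ at $(1,1)$, which replaces the indicator events by the continuous test function $\min(q,\alpha)$ (since $\E[\phi_{[v]_\alpha}\phi_u]=\E[\phi_v\phi_u]$ exactly when the boxes are disjoint, and saturates at level $\alpha$ inside $[v]_\alpha$); this eliminates the non-atom argument entirely, and your derivative computation ($2\alpha\beta/\beta_c$ from both branches of \eqref{eqn: free energy pert}, hence differentiability of the limit and convergence of derivatives by convexity) checks out, as does the extraction of the law from $\int_0^\alpha x_\beta(t)\,dt=\alpha\beta_c/\beta$. What your route buys in cleanliness at the probabilistic end it pays for in the covariance estimates: the claim $\E[\phi_{[v]_\alpha}\phi_{[u]_\alpha}]=\tfrac1\pi\log N^2\,\min(q(v,u),\alpha)+O(1)$ with a uniform $O(1)$ is too strong near the inner boundary of $A_{N,\rho}$, where (as in \eqref{eqn: q in Arho} and in the variance bound for $\phi_{\widetilde B}$ in Section \ref{sect: upperbound}) the honest error is of order $\rho\log N^2$; the identity you obtain is therefore $\E\mathcal{G}^{\times2}_{\beta,N,\rho}[\min(q,\alpha)]=\alpha(1-\beta_c/\beta)+O(\rho)+o_N(1)$, which still yields the Proposition after $\rho\to0$, exactly as you anticipate in your closing remarks. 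Likewise the uniform $O(1)$ bound on the coarse--fine cross covariances $\E[\phi_{[v]_\alpha}(\phi_u-\phi_{[u]_\alpha})]$ for $u\in[v]_\alpha$ is true but requires an argument with the harmonic measure on $\partial[v]_\alpha$ of the type used in the paper's Lemma \ref{lem: correlations}; these estimates are asserted rather than proved in your sketch, but they are of the same nature and difficulty as those the paper carries out, so I see no gap in the strategy. Note finally that both your argument and the paper's rely on the reduction \eqref{eqn: x rho} via Lemma \ref{lem: boundary} and on Theorem \ref{thm:freeenergyperturbed}, which remain the essential inputs.
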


\begin{proof}
Without loss of generality, we suppose that  $\lim_{\rho\to 0} \lim_{N\to\infty}x_{\beta, N,\rho}=x_\beta$ 
in the sense of weak convergence. Uniqueness of the limit $x_\beta$ will then ensure the convergence for the whole sequence by compactness.
Note also that by right-continuity and monotonicity of $x_\beta$, it suffices to show
\begin{equation}
\label{eqn: to prove}
\int_\alpha^1   x_\beta (r) dr = \frac{\beta_c}{\beta} (1-\alpha) , \qquad \text{ for a dense set of $\alpha$'s in $[0,1]$.}
\end{equation}
We can choose a dense set of $\alpha$ such that none of them are atoms of $x_\beta$, that is $x_\beta(\alpha)-x_\beta(\alpha^-)=0$.

Now recall Theorem \ref{thm:freeenergyperturbed}. Pick $\vec{\sigma}=(1,1+u)$ for some parameter $|u|\leq 1$.
 Since $\beta>\sqrt{2\pi}$, $u$ can be taken small enough so that $\beta$ is larger than the critical $\beta$'s of the limit.
The goal is to establish the following equality:
\begin{equation}
\label{eqn: equality}
\int_\alpha^1   x_\beta (r) dr =\lim_{\rho\to 0} \lim_{N\to\infty} \frac{\pi}{\beta^2} \frac{\partial}{\partial u} f_{N,\rho}^{( \alpha, \vec{\sigma})}{ (\beta)} \Big|_{u=0}\ .
\end{equation}
The conclusion follows from this equality.
Indeed, by construction, the function $u\mapsto {  f_{N,\rho}^{( \alpha, \vec{\sigma})}(\beta)}$ is convex. In particular, the limit of the derivatives is the derivative of the limit
at any point of differentiability. Therefore, a straightforward calculation from \eqref{eqn: free energy pert} with $\sigma_1=1$ and $\sigma_2=1+u$ gives:
 \begin{equation}
\lim_{N\to\infty} \frac{\pi}{\beta^2} \frac{\partial}{\partial u} f_{N,\rho}^{( \alpha, \vec{\sigma})}(\beta) =
\begin{cases}
\frac{\sqrt{2\pi}}{\beta} \frac{(1-\alpha)(1+u)}{\sqrt{\alpha + (1-\alpha)(1+u)^2}}, & \text{ if $u>0$,}\\
\frac{\sqrt{2\pi}}{ \beta} (1-\alpha), & \text{ if $u<0$.}
\end{cases}
\end{equation}
This gives \eqref{eqn: to prove} at $u=0$.

We introduce the notation for the {\it overlap at scale $\alpha$}:
\begin{equation}
\label{eqn: q alpha}
q_\alpha(v,v'):=\frac{1}{\frac{1}{\pi}\log N^2}\E\left[\left(\phi_v-  \phi_{[v]_\alpha} \right)\left(\phi_{v'}-  \phi_{[v']_\alpha} \right)\right],
\end{equation}
Equality \eqref{eqn: equality} is proved via two identities: 
\begin{eqnarray}
\label{eqn: BK1}
\int_\alpha^1   x_{\beta, N,\rho} (r) dr &= &(1-\alpha)- \E \mathcal G_{\beta, N, \rho}^{\times 2} \big[ q(v,v') -\alpha ; q(v,v')\geq \alpha\big],\\
\label{eqn: BK2}
\frac{\pi}{\beta^2} \frac{\partial}{\partial u}{f_{N,\rho}^{( \alpha, \vec{\sigma})}}(\beta) \Big|_{u=0} &=&\E \mathcal G_{\beta, N, \rho} \big[ q_\alpha(v,v) \big] -
 \E \mathcal G_{\beta, N, \rho}^{\times 2} \big[q_\alpha(v,v')  ; v'\in [v]_\alpha\big]\ .
\end{eqnarray}
The first identity holds since by Fubini's theorem
$$
\begin{aligned}
\int_\alpha^1   x_{\beta, N,\rho}  (r) dr&= 
\E \mathcal G_{\beta,N,\rho}^{\times 2} \left[ \int_\alpha^1 1_{\{ r\geq q(v,v')\}} dr\right]\\
&= \E \mathcal G_{\beta,N,\rho}^{\times 2} \big[ 1-\alpha;   q(v,v') <\alpha \big] +  \E \mathcal G_{\beta,N,\rho}^{\times 2} \big[ 1-q(v,v');   q(v,v') \geq\alpha \big]\ .
\end{aligned}
$$

For the second identity, direct differentiation gives
$$
\frac{\pi}{\beta^2} \frac{\partial}{\partial u} {f_{N,\rho}^{( \alpha, \vec{\sigma})}}(\beta) \Big|_{u=0}=\frac{1}{\frac{1}{\pi}\log N^2}\E\mathcal G_{\beta,N,\rho}\big[ \phi_v-\phi_{[v]_\alpha}\big]\ .
$$
The identity is then obtained by Gaussian integration by parts.

To prove \eqref{eqn: equality}, we need to relate the overlap at scale $\alpha$ with the overlap as well as the event $\{q(v,v')\geq \alpha\}$ with the event $\{v'\in[v]_\alpha\}$.
This is slightly complicated by the boundary effect present in GFF. 
The equality in the limit $N\to\infty$ between the first terms of \eqref{eqn: BK1} and \eqref{eqn: BK2} is easy.
Because $(\phi_u- \E[\phi_u | \F_{[v]_\alpha^c}], u \in [v]_\alpha)$ has the law of a GFF in $[v]_\alpha$, it follows from Lemma \ref{lem: green estimate} that
$$
 \E\big[(\phi_v-\phi_{[v]_\alpha})^2\big]=\frac{(1-\alpha)}{\pi} \log N^{2} + O_N(1)\ .
$$
Therefore, we have for $v\in A_{N,\rho}$
$$
 \lim_{N\to\infty} \E \mathcal G_{\beta, N, \rho} \left[ q_\alpha(v,v) \right] = 1-\alpha \ .
$$
It remains to establish the equality between the second terms of \eqref{eqn: BK1} and \eqref{eqn: BK2}. Here, a control of the boundary effect is necessary.
The following observation is useful to relate the overlaps and the distances: if $v,v'\in A_{N,\rho}$, Lemma \ref{lem: green estimate} gives
\begin{equation}
\label{eqn: q in Arho}
1-\rho-\frac{\log \|v-v'\|^2}{\log N^2} + o_N(1)\leq q(v,v') \leq 1-\frac{\log \|v-v'\|^2}{\log N^2} + o_N(1)\ .
\end{equation}
On one hand, the right inequality proves the following implication 
\begin{equation}
\label{eqn: right}
\text{$q(v,v')\geq \alpha+\varepsilon$ for some $\varepsilon>0$} \Longrightarrow  \|v-v'\|^{ 2} \leq c N^{2(1-\alpha-\varepsilon)}\ ,
\end{equation}
for some constant $c$ independent of $N$ and $\rho$.
On the other hand, the left inequality gives: 
\begin{equation}
\label{eqn: left}
v'\in [v]_\alpha \Longrightarrow  \text{$q(v,v')\geq \alpha-2\rho$.}
\end{equation} 

Using this, we show
\begin{equation}
\begin{aligned}
\label{eqn: show1}
\Delta_1(N,\rho)&:= \Big|\E \mathcal G_{\beta, N, \rho}^{\times 2} \left[ q(v,v') -\alpha ; q(v,v')\geq \alpha\right]-\E \mathcal G_{\beta, N, \rho}^{\times 2} \left[ q_\alpha(v,v')  ; q(v,v')\geq \alpha\right]\Big| \to 0\ ,\\
\Delta_2(N,\rho)&:=\Big|\E \mathcal G_{\beta, N, \rho}^{\times 2} \left[ q_\alpha(v,v')  ; q(v,v')\geq \alpha\right]-\E \mathcal G_{\beta, N, \rho}^{\times 2} \left[ q_\alpha(v,v')  ; v'\in[v]_\alpha\right]\Big| \to 0\ ,
\end{aligned}
\end{equation}
 in the limit $N\to\infty$ and $\rho\to 0$.
 Let  $\varepsilon>0$. Remark that
\begin{equation}
\label{eqn: rem}
0\leq \E \mathcal G_{\beta, N, \rho}^{\times 2} \left[ q(v,v') -\alpha ; q(v,v')\geq \alpha\right]- \E \mathcal G_{\beta, N, \rho}^{\times 2} \left[ q(v,v') -\alpha ; q(v,v')\geq \alpha+\varepsilon\right]\leq \varepsilon\ .
\end{equation}
To establish the equality of the overlaps on the event $\{q(v,v')\geq \alpha+\varepsilon\}$, consider the decomposition,
\begin{equation}
\label{eqn: fusion1}
\begin{aligned}
&\E\left[\left(\phi_v-  \phi_{[v]_\alpha} \right)\left(\phi_{v'}-  \phi_{[v']_\alpha} \right)\right]=\\
&\E\left[\left(\phi_v-  \E[\phi_{ v}|\F_{[v']^c_\alpha}] \right)\left(\phi_{v'}-  \phi_{[v']_\alpha} \right)\right]
+
\E\left[\left(\E[\phi_{v}|\F_{[v']^c_\alpha}]-  \phi_{[v]_\alpha} \right)\left(\phi_{v'}-  \phi_{[v']_\alpha} \right)\right].
\end{aligned}
\end{equation}
On the event $\{q(v,v')\geq \alpha+\varepsilon\}$, \eqref{eqn: right} implies $\|v-v'\|^{ 2} \leq  c N^{2(1-\alpha-\varepsilon)}$. 
Therefore, the first term of the right side of \eqref{eqn: fusion1} is by Lemma \ref{lem: green estimate}
\begin{equation}
\label{eqn: 1stterm}
\E\left[\left(\phi_v-  \E[\phi_{ v}|\F_{[v']^c_\alpha} \right)\left(\phi_{v'}-  \phi_{[v']_\alpha} \right)\right]=\frac{ 2}{\pi}\log \frac{ N^{(1-\alpha)}}{\|v-v'\|} + O_N(1)\ .
\end{equation}
The second term is negligible.
Indeed, by Cauchy-Schwarz inequality, it suffices to prove that
\begin{equation}
\label{eqn: fusion2}
\E\left[\left(\E[\phi_{v}|\F_{[v']^c_\alpha}]-  \phi_{[v]_\alpha} \right)^2\right]=O_N(1)\ .
\end{equation}
For this, write $\widetilde B$ for the box $[v]_\alpha\cap [v']_\alpha$. We have 
$$
\phi_v -\phi_{[v]_\alpha} =(\phi_v-\E[\phi_v| \F_{\widetilde B^c}])+(\E[\phi_v| \F_{\widetilde B^c}]-\phi_{[v]_\alpha})\ .
$$
Since $\phi_v-\E[\phi_v| \F_{\widetilde B^c}]$ is independent of $\F_{\widetilde B^c}$ and $\E[\phi_v| \F_{\widetilde B^c}]-\phi_{[v]_\alpha}$ is $\F_{\widetilde B^c}$-measurable  (observe that $\F_{\widetilde B^c}\supset \F_{[v]_\alpha^c}$), we get 
$$
 \E[(\phi_v-\phi_{[v]_\alpha})^2]= \E[(\phi_v-\E[\phi_v| \F_{\widetilde B^c}])^2]+ \E[(\E[\phi_v| \F_{\widetilde B^c}]-\phi_{[v]_\alpha})^2]\ .
$$ 
 Moreover, $\E[(\phi_v-\E[\phi_v| \F_{\widetilde B^c}])^2]$ and $\E[(\phi_v-\phi_{[v]_\alpha})^2]$
are both equal to $\frac{1-\alpha}{\pi}\log N^2  + O_N(1)$ by Lemma \ref{lem: green estimate} and the fact that distances of $v$ to vertices in $\partial\widetilde B$ and $\partial[v]_\alpha$ are both proportional to $N^{1-\alpha}$. Therefore $\E[(\E[\phi_v| \F_{\widetilde B^c}]-\phi_{[v]_\alpha})^2]=O_N(1)$. The same argument with $\phi_{[v]_\alpha}$ replaced by $\E[\phi_v|\F_{[v']_\alpha^c}]$ shows that  $\E[(\E[\phi_v| \F_{\widetilde B^c}]-\E[\phi_v|\F_{[v']_\alpha^c}])^2]=O_N(1)$. The two equalities imply \eqref{eqn: fusion2}.
Equations \eqref{eqn: 1stterm} and \eqref{eqn: fusion2} give
\begin{equation}
\label{eqn: qalpha}
q_\alpha(v,v')= 1-\alpha-\frac{\log \|v-v'\|^2}{\log N^2} + o_N(1), \qquad \text{  on $\{q(v,v')\geq \alpha+\varepsilon\}$.}
\end{equation}
Equations \eqref{eqn: q in Arho},  \eqref{eqn: rem} and \eqref{eqn: qalpha} yield $\Delta_1(N,\rho)\to 0$ in the limit $N\to\infty$, $\rho\to 0$ and $\varepsilon\to 0$. 

For $\Delta_2(N,\rho)$, let $\varepsilon'>2\rho$. 
For $v'\in [v]_\alpha$, \eqref{eqn: left} implies $q(v,v')\geq \alpha - 2\rho$. 
On the other hand, by \eqref{eqn: right}, $q(v,v')\geq \alpha+\varepsilon'$ implies $v'\in[v]_\alpha$.
These two observations give the estimate
$$
\Delta_2(N,\rho)
\leq \E \mathcal G_{\beta, N, \rho}^{\times 2} \big[ q_\alpha(v,v')  ; q(v,v')\in [\alpha-\varepsilon',\alpha+\varepsilon']\big]\ .
$$
The right side is clearly smaller than
$$
x_{\beta,N,\rho}(\alpha+\varepsilon')-x_{\beta,N,\rho}(\alpha-\varepsilon')\ .
$$
Under the successive limits $N\to\infty$, $\rho\to 0$, then $\varepsilon'\to 0$, the right side becomes $x_\beta(\alpha)-x_\beta(\alpha-)$.
This is zero since $\alpha$ was chosen not to be an atom of $x_\beta$.
\end{proof}


\section{The free energy of the $(\alpha,\vec{\sigma})$-GFF: proof of Theorem \ref{thm:freeenergyperturbed}}
\label{sect: free energy}

The computation of the free energy of the $(\alpha,\vec{\sigma})$-GFF is divided in two steps. 
First, an upper bound is found by comparing the field $\psi$ in $A_{N,\rho}$ with a ``non-homogeneous'' GREM having the same free energy as a standard 2-level GREM.
Second, we get a matching lower bound using the trivial inequality $f_{N,\rho}^{(\alpha,\vec{\sigma})}(\beta) \ge \ \frac{1}{\log N^2}  \log  \sum_{v \in V_N^\delta} \ee^{\beta \psi_v} $.
The limit of the right term is computed following the method of Daviaud \cite{daviaud}.

\subsection{Proof of the upper bound} 
\label{sect: upperbound}
For conciseness, we only prove the case $\sigma_1\geq \sigma_2$, by a comparison argument with a $2$-level GREM. 
The case $\sigma_1\leq \sigma_2$ is done similarly by comparing with a REM.
The comparison argument will have to be done in two steps to account for boundary effects. 

Divide the set $A_{N,\rho}$ into square boxes of side-length $N^{1-\alpha}/100$. (The factor $1/100$ is a choice. We simply need these boxes to be smaller than the neighborhoods $[v]_\alpha$, yet of the same order of length in $N$.) Pick the boxes in such a way that each $v\in A_{N,\rho}$ belongs to one and only one of these boxes.
The collection of boxes is denoted by $\mathcal B_\alpha$ and $\partial  \mathcal B_\alpha$ denotes $\bigcup_{B\in \mathcal B_\alpha} \partial B$.
For $v\in  A_{N,\rho}$, we write $B(v)$ for the box of  $\mathcal B_\alpha$ to which $v$ belongs.
For  $B\in \mathcal B_{ \alpha}$, denote by $\widetilde B\supset B$ the square box given by the intersections
of all $[u]_\alpha$, $u\in B$, see figure \ref{fig: boxes}. Remark that the side-length of $\widetilde B$ is $cN^{1-\alpha}$, for some constant $c$.
For short, write $\phi_{\widetilde B}:=\E[\phi_{v_B}|\F_{\widetilde B^c}]$ where $v_B$ is the center of the box $B$.
The idea in constructing the GREM is to associate to each point $v  \in B$ the same contribution at scale $\alpha$, namely $\phi_{\widetilde B}$.
One problem is that $\phi_{\widetilde B}$ will not have the same variance for every $B$ since it depends on the distance to the boundary. 
This is the reason why the comparison will need to be done in two steps.
\begin{figure}[h] 
\begin{center}
\includegraphics[height=4cm]{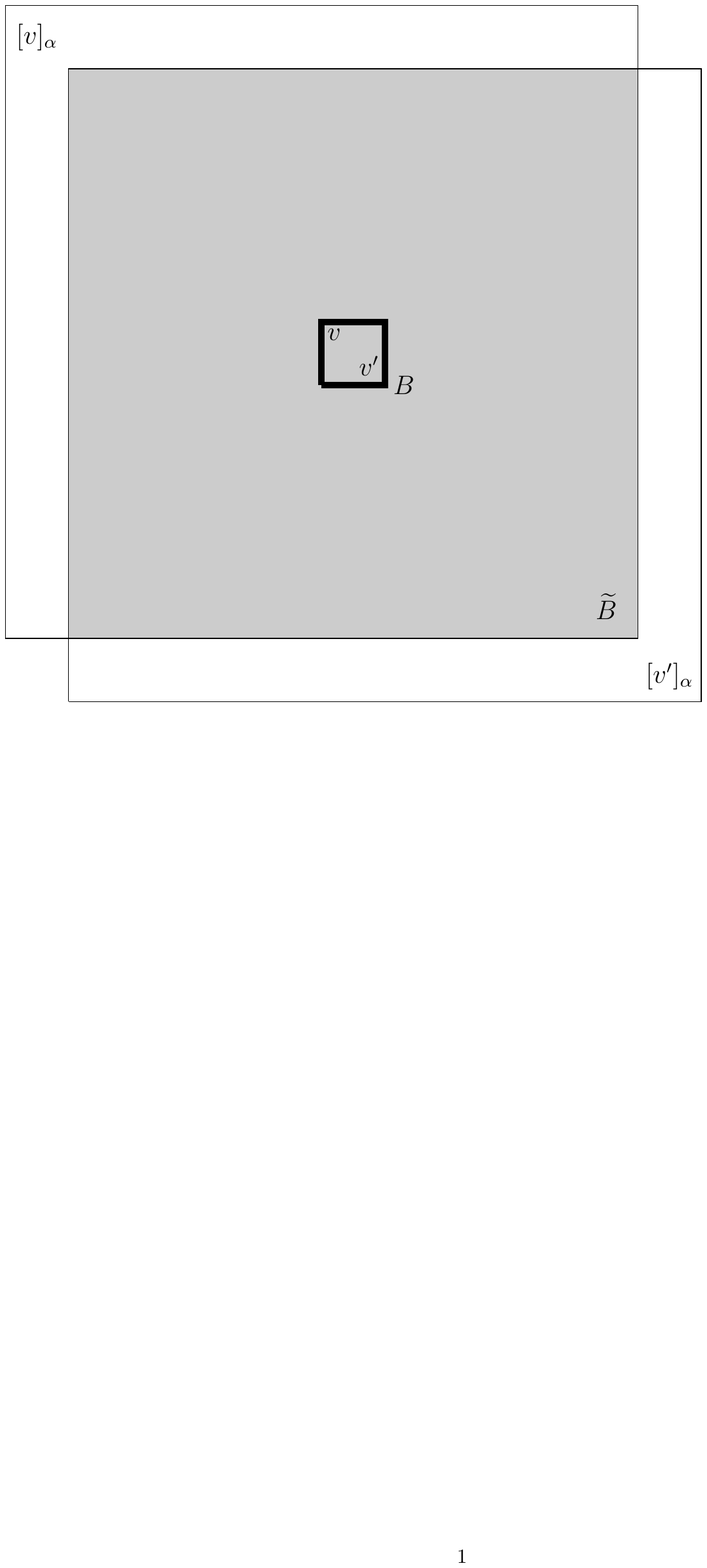}
\end{center}
\caption{The box $B\in \mathcal B_\alpha$ and the corresponding box $\widetilde B$ which is the intersection of all the neighborhoods $[v]_\alpha$, $v\in B$.}
\label{fig: boxes}
\end{figure}

First, consider the hierarchical Gaussian field $(\widetilde\psi_v, v\in A_{N,\rho})$:  
\begin{equation}
\label{eqn: tilde psi}
\widetilde \psi_v= g^{(1)}_{B(v)} + g^{(2)}_v,
\end{equation}
where $(g^{(2)}_v, v\in A_{N,\rho} )$ are independent centered Gaussians { (also independent from $(g^{(1)}_B, B\in \mathcal B_\alpha)$)} with variance
$$
\E[(g^{(2)}_v)^2]= \E[\psi_v^2] - \E[ (g^{(1)}_{B(v)})^2]\ .
$$
This ensures that $\E[\psi_v^2]= \E[\widetilde\psi_v^2]$ for all $v\in  A_{N,\rho}$.
The variables $(g^{(1)}_B, B\in \mathcal B_\alpha)$  are also independent centered Gaussians with  variance chosen to be $\sigma_1^2\E[\phi_{\widetilde B}^2]+C$ for some constant $C\in \R$ independent of $B$ in $ \mathcal B_{ \alpha}$ and independent of $N$.
The next lemma ensures that
\begin{equation}
\label{eqn: corr bound}
\E[\psi_v\psi_{v'}]\geq \E[\widetilde\psi_v\widetilde \psi_{v'}]\ .
\end{equation}
\begin{lemma}
\label{lem: correlations}
Consider the field $(\psi_v, v\in  A_{N,\rho})$ as in \eqref{eqn: psi}. Then $\E[\psi_v\psi_{v'}]\geq 0$.
Moreover, 
if $v$ and $v'$ both belong to $B\in \mathcal B_\alpha$, then
$$
\E[\psi_v\psi_{v'}]\geq \sigma_1^2\E[\phi_{\widetilde B}^2]+C\ ,
$$
for some constant $C\in\R$ independent of $N$.
\end{lemma}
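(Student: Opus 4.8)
The plan is to treat the two assertions separately and to reduce everything to the non-negativity of the Green function together with the two-scale estimates of Lemma~\ref{lem: green estimate}. Throughout I work in the relevant regime $\sigma_1\geq\sigma_2\geq 0$.

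For the first assertion, rewrite \eqref{eqn: psi} as $\psi_v=\sigma_2\phi_v+(\sigma_1-\sigma_2)\phi_{[v]_\alpha}$. Since both coefficients are non-negative, $\E[\psi_v\psi_{v'}]$ is a non-negative linear combination of $\E[\phi_v\phi_{v'}]$, $\E[\phi_v\phi_{[v']_\alpha}]$, $\E[\phi_{[v]_\alpha}\phi_{v'}]$ and $\E[\phi_{[v]_\alpha}\phi_{[v']_\alpha}]$. By \eqref{eqn: field decomp} each of these has the form $\sum_{u,u'}p(u)p'(u')G_{V_N}(u,u')$ with non-negative weights $p,p'$ (harmonic measures, or point masses), and $G_{V_N}$ is non-negative because by \eqref{eqn: cov} it is an expected number of visits. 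Hence $\E[\psi_v\psi_{v'}]\geq 0$.

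For the second assertion, set $b_v:=\phi_v-\phi_{[v]_\alpha}$, so that $\psi_v=\sigma_1\phi_{[v]_\alpha}+\sigma_2 b_v$ and
\[
\E[\psi_v\psi_{v'}]=\sigma_1^2\,\E[\phi_{[v]_\alpha}\phi_{[v']_\alpha}]
+\sigma_1\sigma_2\big(\E[\phi_{[v]_\alpha}b_{v'}]+\E[b_v\phi_{[v']_\alpha}]\big)
+\sigma_2^2\,\E[b_vb_{v'}].
\]
I would show the last two groups are bounded below uniformly while the first equals $\sigma_1^2\E[\phi_{\widetilde B}^2]$ up to a uniformly bounded error. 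For the cross term, writing $\phi_{[v]_\alpha}=\sum_{u\in\partial[v]_\alpha}p_{\alpha,v}(u)\phi_u$ and using that $b_{v'}=\phi_{v'}-\E[\phi_{v'}\,|\,\F_{[v']_\alpha^c}]$ is independent of $\F_{[v']_\alpha^c}$, one gets $\E[\phi_ub_{v'}]=G_{[v']_\alpha}(u,v')\,\indic_{\{u\in[v']_\alpha\}}\geq 0$, hence $\E[\phi_{[v]_\alpha}b_{v'}]\geq 0$, and symmetrically for the other cross term. For the fine term I would decompose both fluctuations relative to the common box $\widetilde B$: with $\mathring\phi_v:=\phi_v-\E[\phi_v\,|\,\F_{\widetilde B^c}]$ (a GFF in $\widetilde B$, independent of $\F_{\widetilde B^c}$) and $w_v:=\E[\phi_v\,|\,\F_{\widetilde B^c}]-\phi_{[v]_\alpha}$, one has $b_v=\mathring\phi_v+w_v$. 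The $\mathring\phi$--$w$ cross terms vanish by independence, the principal term $\E[\mathring\phi_v\mathring\phi_{v'}]=G_{\widetilde B}(v,v')$ is non-negative, and $\E[w_vw_{v'}]=O_N(1)$ by Cauchy--Schwarz since each $\E[w_v^2]=O_N(1)$ by the martingale-increment argument used to establish \eqref{eqn: fusion2}. Thus $\E[b_vb_{v'}]\geq -O_N(1)$.

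It remains to prove $\E[\phi_{[v]_\alpha}\phi_{[v']_\alpha}]=\E[\phi_{\widetilde B}^2]+O_N(1)$ for $v,v'\in B$. As $\E[\phi_{[v]_\alpha}^2]=\E[\phi_{[v']_\alpha}^2]=\E[\phi_{\widetilde B}^2]=\tfrac{\alpha}{\pi}\log N^2+O_N(1)$ by Lemma~\ref{lem: green estimate}, polarization reduces this to $\E[(\phi_{[v]_\alpha}-\phi_{[v']_\alpha})^2]=O_N(1)$, which I would obtain by the triangle inequality in $L^2$, interpolating through $\E[\phi_v\,|\,\F_{\widetilde B^c}]$, $\phi_{\widetilde B}=\E[\phi_{v_B}\,|\,\F_{\widetilde B^c}]$ and $\E[\phi_{v'}\,|\,\F_{\widetilde B^c}]$. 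The ``vertical'' steps $\|\phi_{[v]_\alpha}-\E[\phi_v\,|\,\F_{\widetilde B^c}]\|_2=O_N(1)$ are again the identity behind \eqref{eqn: fusion2}. The hard part will be the ``horizontal'' step $\|\E[\phi_v\,|\,\F_{\widetilde B^c}]-\E[\phi_{v_B}\,|\,\F_{\widetilde B^c}]\|_2=O_N(1)$, i.e. that the harmonic extensions of two interior points are $O_N(1)$-close despite $\E[(\phi_v-\phi_{v_B})^2]$ being of order $\log N$. Here I would use the orthogonal decomposition $\phi_v-\phi_{v_B}=\E[\phi_v-\phi_{v_B}\,|\,\F_{\widetilde B^c}]+(\mathring\phi_v-\mathring\phi_{v_B})$, which gives that the desired norm equals $\E[(\phi_v-\phi_{v_B})^2]-\E[(\mathring\phi_v-\mathring\phi_{v_B})^2]$; both variances equal $\tfrac{4}{\pi}\log\|v-v_B\|+O_N(1)$, the first by \eqref{eqn: covariance} in $V_N$ and the second by the same estimate applied inside $\widetilde B$ (whose side is of order $N^{1-\alpha}$ and which contains $v,v_B$ well inside it), so the divergent $\log N$ contributions cancel. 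This two-scale cancellation is the delicate point and is exactly where the scale structure enters; it relies on Lemma~\ref{lem: green estimate} holding with the same constant $\tfrac1\pi$ at both scales. Combining the three estimates yields $\E[\psi_v\psi_{v'}]\geq\sigma_1^2\E[\phi_{\widetilde B}^2]-O_N(1)$, which is the claim with $C:=-O_N(1)$ a constant independent of $N$ and of $B\in\mathcal B_\alpha$.
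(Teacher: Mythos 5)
Your proof follows the paper's argument almost step for step: the same rewriting $\psi_v=(\sigma_1-\sigma_2)\phi_{[v]_\alpha}+\sigma_2\phi_v$ for the positivity claim, the same four-term expansion of $\E[\psi_v\psi_{v'}]$, the same conditioning argument reducing the cross terms to non-negative Green functions of $[v']_\alpha$, and the same $\F_{\widetilde B^c}$-decomposition of the $\sigma_2^2$ term into a non-negative Green function of $\widetilde B$ plus an $O_N(1)$ error controlled via \eqref{eqn: fusion2}. The one place you genuinely diverge is the ``horizontal'' step $\E\big[\big(\E[\phi_v|\F_{\widetilde B^c}]-\E[\phi_{v_B}|\F_{\widetilde B^c}]\big)^2\big]=O_N(1)$: the paper simply quotes Lemma 12 of \cite{bolthausen-deuschel-giacomin}, whereas you re-derive it from the orthogonal decomposition and the cancellation of the logarithmic divergences at the two scales; this is indeed the mechanism behind the cited lemma, and your identity $\|\E[\phi_v-\phi_{v_B}\,|\,\F_{\widetilde B^c}]\|_2^2=\E[(\phi_v-\phi_{v_B})^2]-\E[(\mathring\phi_v-\mathring\phi_{v_B})^2]$ is correct. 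One small repair is needed there: you invoke \eqref{eqn: covariance} for the outer variance, but that estimate is stated only for $v,v'\in V_N^\delta$, and points of $A_{N,\rho}$ need not lie in $V_N^\delta$ (compare the $\rho\log N^2$ slack in \eqref{eqn: q in Arho}, which would ruin an $O_N(1)$ cancellation if used naively). The fix is to go back to Lemma \ref{lem: green estimate} directly: writing $\E[(\phi_v-\phi_{v_B})^2]=2a(v,v_B)+E_v\left[a(v,S_{\tau_{V_N}})-a(v_B,S_{\tau_{V_N}})\right]-E_{v_B}\left[a(v,S_{\tau_{V_N}})-a(v_B,S_{\tau_{V_N}})\right]$ and noting that $|a(v,u)-a(v_B,u)|=o_N(1)$ uniformly over $u\in\partial V_N$ because $\|v-v_B\|\le cN^{1-\alpha}\ll N^{1-\rho}\le \|v-u\|$, one obtains $\E[(\phi_v-\phi_{v_B})^2]=\tfrac{4}{\pi}\log\|v-v_B\|+O_N(1)$ as you claim. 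With that adjustment your argument is complete and, if anything, more self-contained than the paper's.
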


\begin{proof}
For the first assertion, write
$$
\psi_v=(\sigma_1-\sigma_2) \phi_{[v]_\alpha}+\sigma_2 \phi_v\ .
$$
The representation $\phi_{[v]_\alpha}=\sum_{u\in \partial [v]_\alpha} p_{\alpha,v}(u)~\phi_u$ of Lemma \ref{lem: GFF} and the fact that $\sigma_1>\sigma_2$ 
imply that $\E[\psi_v\psi_v'] \ge 0$ since the field $\phi$ is positively correlated by \eqref{eqn: cov}.  

Suppose now that $v,v'\in B$ where $B\in \mathcal B_\alpha$. The covariance can be written as
\begin{equation}
\label{eqn: covariance psi}
\begin{aligned}
\E[\psi_v\psi_{v'}]&= \sigma_1^2\E\left[\phi_{[v]_\alpha}\phi_{[v']_\alpha}\right] + \sigma_2^2\E\left[(\phi_v-\phi_{[v]_\alpha})(\phi_{v'}-\phi_{[v']_\alpha})\right] \\
&+ \sigma_1\sigma_2 \E\left[\phi_{[v]_\alpha}(\phi_{v'}-\phi_{[v']_\alpha})\right]+\sigma_1\sigma_2 \E\left[\phi_{[v']_\alpha}(\phi_v-\phi_{[v]_\alpha})\right]\ .
\end{aligned}
\end{equation}
We first prove that the last two terms of \eqref{eqn: covariance psi} are positive.
By Lemma \ref{lem: GFF}, we can write $\phi_{[v]_\alpha}=\sum_{u\in\partial [v]_\alpha} p_{\alpha,v}(u)\  \phi_u$. 
Note that the vertices $u$ that are in $[v']_\alpha^c$ will not contribute
to the covariance $ \E\left[\phi_{[v]_\alpha}(\phi_{v'}-\phi_{[v']_\alpha})\right]$ by conditioning. Thus
$$
\begin{aligned}
 \E\left[\phi_{[v]_\alpha}(\phi_{v'}-\phi_{[v']_\alpha})\right]&=\sum_{u\in\partial [v]_\alpha\cap [v']_\alpha} p_{\alpha,v}(u)\  \E\left[\phi_u(\phi_{v'}-\phi_{[v']_\alpha})\right]\\
 &=\sum_{u\in\partial [v]_\alpha\cap [v']_\alpha} p_{\alpha,v}(u)\  \E\left[(\phi_u-\E[\phi_u|\F_{[v']_\alpha^c}])(\phi_{v'}-\E[\phi_{ v'}|\F_{[v']_\alpha^c}])\right]\ .
\end{aligned}
$$
Lemma  \ref{lem: green estimate} ensures that  the correlation in the sum are positive.

For the first term of \eqref{eqn: covariance psi}, the idea is to show that $\phi{[v]_\alpha}$ and $\phi_{\widetilde B}$ are close in the $L^2$-sense.
The same argument used to prove \eqref{eqn: fusion2} shows that
\begin{equation}
\label{eqn: lem12}
\E\left[\left(\phi_{[v]_\alpha} - \E[\phi_v| \F_{\widetilde B^c}]\right)^2\right]= O_N(1)\ .
\end{equation}
Moreover, since $v$ and $v_B$ are also at a distance smaller than $N^{1-\alpha}/100$ from each other, Lemma 12 in \cite{bolthausen-deuschel-giacomin} implies that 
\begin{equation}
\label{eqn: lem12b}
\E\left[\left(\phi_{\widetilde B}- \E[\phi_v| \F_{\widetilde B^c}]\right)^2\right]= O_N(1)\ .
\end{equation}  
 Equations \eqref{eqn: lem12} and \eqref{eqn: lem12b} give $\E[(\phi_{\widetilde B}-\phi_{[v]_\alpha})^2]= O_N(1)$ and similarly for $v'$. 
 All the above sum up to
 \begin{equation}
 \sigma_1^2\E\left[\phi_{[v]_\alpha}\phi_{[v']_\alpha}\right]=\sigma_1^2\E[\phi_{\widetilde B}^2]+O_N(1)\ .
 \end{equation}
 
 It remains to show that the second term of \eqref{eqn: covariance psi} is greater than $O_N(1)$. 
 Since $\phi_{[v]_\alpha}$ and $\phi_{[v']_\alpha}$ are $\F_{\widetilde B^c}$-measurable by definition of the box $\widetilde B$, we have the decomposition
$$
\begin{aligned}
\E\left[(\phi_v-\phi_{[v]_\alpha})(\phi_{v'}-\phi_{[v']_\alpha})\right]&=\E[(\phi_v-\E[\phi_v| \F_{\widetilde B^c}])(\phi_{v'}-\E[\phi_{v'}| \F_{\widetilde B^c}])] \\
&+ \E[(\E[\phi_v| \F_{\widetilde B^c}]-\phi_{[v]_\alpha})(\E[\phi_{v'}| \F_{\widetilde B^c}]-\phi_{[v']_\alpha})]\ .
\end{aligned}
$$
The first term is positive by Lemma \ref{lem: GFF}. As for the second, Equation \eqref{eqn: lem12} shows that
$$
\E\Big[\left(\E[\phi_v| \F_{\widetilde B^c}]-\phi_{[v]_\alpha}\right)\left(\E[\phi_{v'}|\F_{\widetilde B^c}]-\phi_{[v']_\alpha}\right)\Big]=O_N(1)\ .
$$
This concludes the proof of the lemma.
\end{proof}

Equation \eqref{eqn: corr bound} implies that the free energy of $\psi$ is smaller than the one of $\widetilde \psi$ by a standard comparison lemma, see Lemma \ref{lem: slepian} in the Appendix. It remains to prove an upper bound for the free energy of $\widetilde \psi$. 

Note that the field $\widetilde \psi$ is not a GREM {\it per se} because the variances of $g^{(1)}_B$, $B\in \mathcal B_\alpha$, are not the same for every $B$, as it depends on the distance of $B$ to the boundary. However, the variances of $\phi_{\widetilde B}$, $B\in \mathcal B_\alpha$, are uniformly bounded by $\frac{\alpha}{\pi}\log N^2+O_N(1)$; indeed
$$
\begin{aligned}
\E\left[\phi_{\widetilde B}^2\right]
&=\E\left[\phi_{v_B}^2\right] - \E\left[(\phi_{v_B}-\phi_{\widetilde B})^2\right]\\
&= \E\left[\phi_{v_B}^2\right] - \frac{1-\alpha}{\pi} \log N^2 + O_N(1)\\
&\leq \frac{1}{\pi}\log N^2  - \frac{1-\alpha}{\pi} \log N^2 + O_N(1)= \frac{\alpha}{\pi} \log N^2 + O_N(1),
\end{aligned}
$$
where we used Lemmas \ref{lem: GFF} and \ref{lem: green estimate} in the second line and Lemma \ref{lem: green estimate} in the third.

Moreover, note that for $v \in B$,
$$
\E[(g^{(2)}_v)^2]= \E[\psi_v^2] - \E[ (g^{(1)}_B)^2]= \sigma_1^2\big(\E[\phi_{[v]_\alpha}^2]-  \E[ \phi_{\widetilde B}^2]\big)+ \sigma_2^2\frac{1-\alpha}{\pi} \log N^2 - C\sigma_1^2\ .
$$
The first term is of order $O_N(1)$ by Equations \eqref{eqn: lem12} and \eqref{eqn: lem12b}. Thus one has
$$
\E[(g^{(2)}_v)^2]= \sigma_2^2\frac{1-\alpha}{\pi} \log N^2 +O_N(1)\ .
$$
The important point is that the variance of $g_v^{(2)}$ of $\widetilde \psi$ is uniform in $v$, up to lower order terms. 
Now consider the $2$-level GREM $(\bar \psi_v, v\in  A_{N,\rho})$
\begin{equation}
\bar \psi_v= \bar g^{(1)}_B + g^{(2)}_v 
\end{equation}
where $(g^{(2)}_v, v\in A_{N,\rho} )$ are as before and $(\bar g^{(1)}_B, B\in \mathcal B_\alpha)$ are i.i.d.~Gaussians of variance $\frac{\alpha}{\pi}\log N^2+O_N(1)$.
This field differs from $\widetilde \psi$ only from the fact that the variance of $ \bar g^{(1)}_B$ is the same for all $B$ and is the maximal variance of $(g^{(1)}_B, B\in \mathcal B_\alpha)$. 
The calculation of the free energy of $(\bar \psi_v, v\in  A_{N,\rho})$ is a standard computation and gives the correct upper bound in the statement of Theorem \ref{thm:freeenergyperturbed}.
(We refer to \cite{bolthausen-sznitman} for the detailed computation of the free energy of the GREM.) 
The fact that the free energy of $\bar \psi$ is larger than the one of $\widetilde \psi$ follows from the next lemma showing that the free energy of a hierarchical field is an increasing function of the variance of each point at the first  level. 

\begin{lemma}
Consider $N_1,N_2\in\N$. Let $(X^{(1)}_{v_1} , v_1\leq N_1)$ and $(X^{(2)}_{v_1,v_2} ; v_1\leq N_1, v_2\leq N_2)$.
Consider the Gaussian field of the form 
$$
X_v=\sigma_1(v_1) X^{(1)}_{v_1}+\sigma_2 X^{(1)}_{v_1,v_2}\ , \ \ v=(v_1,v_2)
$$ 
where $\sigma_2>0$ and $\sigma_1(v_1)>0$, $v_1\leq N_1$, might depend on $v_1$. 
Then $\E\left[\log \sum_v e^{\beta X_v}\right]$ is an increasing function in each variable $\sigma_1(v_1)$.
\end{lemma}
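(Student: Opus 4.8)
The plan is to show that the partial derivative of the free energy $F\big(\sigma_1(\cdot)\big):=\E\big[\log \sum_v e^{\beta X_v}\big]$ with respect to any single $\sigma_1(w_1)$ is nonnegative, via Gaussian integration by parts. Throughout I use the hierarchical structure implicit in the construction: the first-level variables $(X^{(1)}_{v_1})$ are independent centered Gaussians, independent of the second-level family $(X^{(2)}_{v_1,v_2})$, which are themselves independent centered Gaussians. Write $\mathcal G(\{v\}):=e^{\beta X_v}/Z$ with $Z:=\sum_v e^{\beta X_v}$ for the Gibbs weights, and for a fixed first-level index $w_1$ set $W_{w_1}:=\sum_{v_2\le N_2}\mathcal G(\{(w_1,v_2)\})$, the total Gibbs mass carried by the cylinder $\{v:v_1=w_1\}$. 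By construction $W_{w_1}\in[0,1]$, and this is the crucial observation that makes the sign work out.

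First I would differentiate under the expectation. Since $\partial X_v/\partial\sigma_1(w_1)=X^{(1)}_{v_1}\indic_{\{v_1=w_1\}}$ and $X^{(1)}_{v_1}=X^{(1)}_{w_1}$ factors out of the cylinder sum, a direct computation gives
\[
\frac{\partial F}{\partial\sigma_1(w_1)}=\beta\,\E\!\big[X^{(1)}_{w_1}\,W_{w_1}\big].
\]
Next I would apply Gaussian integration by parts in the variable $X^{(1)}_{w_1}$. Because $X^{(1)}_{w_1}$ is independent of every other Gaussian appearing in $W_{w_1}$, all cross terms vanish and only the self-correlation survives:
\[
\E\!\big[X^{(1)}_{w_1}\,W_{w_1}\big]=\E\big[(X^{(1)}_{w_1})^2\big]\;\E\!\Big[\tfrac{\partial W_{w_1}}{\partial X^{(1)}_{w_1}}\Big].
\]

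The remaining step is the computation of $\partial W_{w_1}/\partial X^{(1)}_{w_1}$. Writing $A:=\sum_{v_2}e^{\beta X_{(w_1,v_2)}}$ so that $W_{w_1}=A/Z$, and noting that $\partial X_u/\partial X^{(1)}_{w_1}=\sigma_1(w_1)$ when $u_1=w_1$ and $0$ otherwise, one gets $\partial A/\partial X^{(1)}_{w_1}=\beta\sigma_1(w_1)A$ and $\partial Z/\partial X^{(1)}_{w_1}=\beta\sigma_1(w_1)A$ (only the cylinder terms contribute, and those sum to $A$). The quotient rule then yields
\[
\frac{\partial W_{w_1}}{\partial X^{(1)}_{w_1}}=\beta\,\sigma_1(w_1)\,W_{w_1}\big(1-W_{w_1}\big).
\]
Combining the three displays gives $\partial F/\partial\sigma_1(w_1)=\beta^2\,\sigma_1(w_1)\,\E[(X^{(1)}_{w_1})^2]\,\E[W_{w_1}(1-W_{w_1})]$, which is nonnegative because $\sigma_1(w_1)>0$ and $0\le W_{w_1}\le 1$. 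This establishes monotonicity in each $\sigma_1(v_1)$.

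There is no serious obstacle here; the argument is essentially a one-line integration by parts once the bookkeeping is in place. The only points requiring care are the justification of differentiation under the expectation and of the Gaussian integration-by-parts formula — both standard, since $\log Z$ and its derivatives grow at most polynomially in the Gaussians while the Gaussian density decays super-polynomially — together with the use of independence of $X^{(1)}_{w_1}$ from the rest of the field, which is precisely what collapses the integration-by-parts sum to the single sign-definite term $W_{w_1}(1-W_{w_1})$. Were the first-level variables correlated, extra terms of indefinite sign would arise and the monotonicity could fail.
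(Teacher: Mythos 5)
Your proof is correct and follows essentially the same route as the paper: differentiate in $\sigma_1(w_1)$, apply Gaussian integration by parts, and observe that the resulting quantity is $\beta^2\sigma_1(w_1)$ times (a variance factor times) $\E[W_{w_1}(1-W_{w_1})]\ge 0$, which is exactly the paper's expression $\frac{A}{Z}-\frac{A^2}{Z^2}$ written more transparently. The only cosmetic difference is that you keep the factor $\E[(X^{(1)}_{w_1})^2]$ explicit where the paper implicitly normalizes it away; the sign argument is identical.
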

\begin{proof}
Direct differentiation gives
$$
\frac{\partial}{\partial \sigma_1(v_1)} \E\left[\log \sum_v e^{\beta X_v}\right]= \beta\E \left[\frac{\sum_{v_2} X_{v_1}e^{\beta X_{v_1,v_2}}}{Z_N(\beta)}\right]\ ,
$$
where $Z_N(\beta)=\sum_v e^{\beta X_v}$.
Gaussian integration by part then yields
$$
\beta\E \left[\frac{\sum_{v_2} e^{X_{v_1}\beta X_{v_1,v_2}}}{\sum_v e^{\beta X_v}}\right]
=\beta^2 \sigma_1(v_1)\E \left[ \frac{\sum_{v_2} e^{\beta X_{v_1,v_2}}}{Z_N(\beta)} - \frac{\sum_{v_2,v_2'} e^{\beta X_{v_1,v_2} }e^{\beta X_{v_1,v'_2}}}{Z_N(\beta)^2}\right]\ .
$$
The right side is clearly positive, hence proving the lemma.
\end{proof}


\subsection{Proof of the lower bound} 
Recall the definition of $V_{N}^\delta$ given in the introduction.
The two following propositions are used to compute the log-number of high points of the field $\psi$ in $V_{N}^\delta$.
The treatment follows the treatment of Daviaud \cite{daviaud} for the standard GFF.
The lower bound for the free energy is then computed using Laplace's method.
Define for simplicity $V_{12}:=\sigma_1^2\alpha+\sigma_2^2(1-\alpha)$.

\begin{proposition}
\label{prop:perturbed-maximum}
\begin{equation*}
\lim_{N \to \infty} \p\left(  \max_{v \in V_N^\delta} \psi_v  \ge  \sqrt{\frac{2}{\pi}} \gamma_{max}  \log N^2  \right) =0,
\end{equation*}
where 
\begin{equation*}
\gamma_{max}=\gamma_{max}(\alpha,\vec{\sigma}):=
\begin{cases}
\sqrt{V_{12}}, \ \  &\text{ if $\sigma_1\leq \sigma_2$,}
\\
\sigma_1\alpha+\sigma_2(1-\alpha),  \ \ &\text{ if $\sigma_1\geq \sigma_2$. }
\end{cases}
\end{equation*}
\end{proposition}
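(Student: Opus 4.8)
The plan is to bound $\p(\max_{v\in V_N^\delta}\psi_v\ge\sqrt{2/\pi}\,\gamma_{max}\log N^2)$ by a first-moment (union bound) argument, the natural tool for an upper bound on a maximum. Working in the bulk set $V_N^\delta$ is what makes this tractable: the boundary effects that complicate the analysis on $A_{N,\rho}$ are absent, so by Lemma \ref{lem: green estimate} the coarse part $\sigma_1\phi_{[v]_\alpha}$ has variance $\sigma_1^2\frac{\alpha}{\pi}\log N^2+O_N(1)$ and the fine part $\sigma_2(\phi_v-\phi_{[v]_\alpha})$ has variance $\sigma_2^2\frac{1-\alpha}{\pi}\log N^2+O_N(1)$, uniformly in $v$, and the two are independent by the Markov property (Lemma \ref{lem: GFF}), giving $\mathrm{Var}(\psi_v)=V_{12}\frac1\pi\log N^2+O_N(1)$. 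The two cases in the statement reflect whether the cheapest way to reach the level $\sqrt{2/\pi}\,\gamma_{max}\log N^2$ splits the exceedance between the two scales in an interior fashion ($\sigma_1\le\sigma_2$) or is forced to the boundary of the admissible region ($\sigma_1\ge\sigma_2$); this is exactly what makes the regimes genuinely different.

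For $\sigma_1\le\sigma_2$ one has $\gamma_{max}=\sqrt{V_{12}}$ and I would use the plain union bound
\[
\p\Big(\max_{v\in V_N^\delta}\psi_v\ge t_N\Big)\le\sum_{v\in V_N^\delta}\p(\psi_v\ge t_N),\qquad t_N:=\sqrt{\tfrac2\pi}\,\sqrt{V_{12}}\,\log N^2 .
\]
With the refined Gaussian tail $\p(\psi_v\ge t_N)\le\frac{\sqrt{\mathrm{Var}}}{t_N\sqrt{2\pi}}e^{-t_N^2/2\mathrm{Var}}$ and $\mathrm{Var}=V_{12}\frac1\pi\log N^2+O_N(1)$, the exponent is $t_N^2/2\mathrm{Var}=\log N^2-O(1)$, so each term is $O(1)\,N^{-2}(\log N)^{-1/2}$, and summing over the $\le N^2$ points gives a bound of order $(\log N)^{-1/2}\to0$. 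The choice $\gamma_{max}=\sqrt{V_{12}}$ makes the first moment exactly critical (the exponent equals $\log N^2$), and the subexponential prefactor $\sqrt{\mathrm{Var}}/t_N\sim(\log N)^{-1/2}$ supplies the decay at the exact threshold. No multiscale information is used, since correlations are irrelevant to the first moment.

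For $\sigma_1\ge\sigma_2$ we have $\gamma_{max}=\sigma_1\alpha+\sigma_2(1-\alpha)$, and $\gamma_{max}^2=V_{12}-\alpha(1-\alpha)(\sigma_1-\sigma_2)^2\le V_{12}$, strictly when $\sigma_1\neq\sigma_2$. Consequently the plain first moment $N^2\,\p(\psi_v\ge t_N)\approx N^{2(1-\gamma_{max}^2/V_{12})}$ now \emph{diverges}: it is dominated by configurations in which the coarse field $\sigma_1\phi_{[v]_\alpha}$ is anomalously large, yet these do not occur because the coarse field has only $\sim N^{2\alpha}$ effective degrees of freedom. I would therefore exploit the two-scale structure. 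Grouping $V_N^\delta$ into the boxes $\mathcal B_\alpha$ of Section \ref{sect: upperbound}, the coarse field is essentially constant, equal to $\sigma_1\phi_{\widetilde B}$, on each box; a first moment over the $\sim N^{2\alpha}$ boxes (again with the $1/t$ prefactor) shows that with probability tending to one no box has $\sigma_1\phi_{\widetilde B}$ exceeding its own maximum $\sqrt{2/\pi}\,\sigma_1\alpha\log N^2$. On this event any point reaching $t_N$ must have its fine part exceed $\sqrt{2/\pi}\,\sigma_2(1-\alpha)\log N^2$; since the fine field restricted to a box is a GFF on a box of side $\sim N^{1-\alpha}$, a first moment over its $\sim N^{2(1-\alpha)}$ points, summed over the $\sim N^{2\alpha}$ boxes, is again critical and closes the estimate. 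Equivalently, and more robustly, I would transfer the bound from the exactly hierarchical $2$-level GREM $\bar\psi$ of Section \ref{sect: upperbound}: the comparison inequality $\E[\psi_v\psi_{v'}]\ge\E[\widetilde\psi_v\widetilde\psi_{v'}]$ of \eqref{eqn: corr bound}, together with variance monotonicity, gives $\max_{V_N^\delta}\psi_v\preceq\max_{V_N^\delta}\widetilde\psi_v\preceq\max_{V_N^\delta}\bar\psi_v$ by Slepian's lemma (Lemma \ref{lem: slepian}) and a Sudakov--Fernique argument, and the maximum of the clean GREM $\bar\psi$ is a standard computation yielding precisely $\sqrt{2/\pi}\,(\sigma_1\alpha+\sigma_2(1-\alpha))\log N^2$.

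The main obstacle lies entirely in the $\sigma_1\ge\sigma_2$ regime and concerns the \emph{sharpness} of the coarse truncation: since $\gamma_{max}^2<V_{12}$, the first moment is exponentially divergent away from the cap, so the coarse field must be truncated exactly at $\sqrt{2/\pi}\,\sigma_1\alpha\log N^2$ — inflating the cap by any amount growing with $N$, even one that is $o(\log N^2)$, reintroduces the divergence. This is in tension with the fact that the coarse approximation $\phi_{[v]_\alpha}\approx\phi_{\widetilde B}$ is only accurate to $O_N(1)$ in $L^2$, whose maximum over the $\sim N^{2(1-\alpha)}$ points of a box is of order $\sqrt{\log N}$. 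Reconciling these is the delicate point, and it is exactly why passing to the exactly hierarchical $\bar\psi$ — where the two levels are independent and the truncation is clean by construction — is the safe route; the borderline situation of the threshold being attained exactly is, in every case, absorbed by the subexponential $(\log N)^{-1/2}$ prefactors in the Gaussian tails.
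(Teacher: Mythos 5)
Your main line of attack is the paper's: the case $\sigma_1\le\sigma_2$ is dispatched by a plain union bound, and for $\sigma_1\ge\sigma_2$ the paper does exactly what you call the ``more robust'' route --- it notes that on $V_N^\delta$ the variances $\E[\phi_{\widetilde B}^2]$ are already uniform up to $O_N(1)$ (so $\widetilde\psi$ restricted to $V_N^\delta$ is itself a $2$-level GREM, and no further passage to $\bar\psi$ is needed), applies Lemma \ref{lem: slepian} via \eqref{eqn: corr bound}, and quotes the maximal displacement of the $2$-level GREM from Bovier--Kurkova. Your observation that the proposition only asks for the leading order $\gamma_{max}\log N^2$ is also why the $O_N(1)$ variance discrepancies you worry about are harmless here: they shift the GREM threshold by $o(\log N^2)$.

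However, the direct two-scale argument you present as ``equivalent'' is not correct as sketched, so the two routes are not interchangeable. Capping the coarse field at $\sqrt{2/\pi}\,\sigma_1\alpha\log N^2$ and then demanding that the fine part exceed the full residual $\sqrt{2/\pi}\,\sigma_2(1-\alpha)\log N^2$ gives an expected number of exceedances of order $1$ \emph{per box}; summed over the $\sim N^{2\alpha}$ boxes this is $\sim N^{2\alpha}$, which diverges rather than closing the estimate. The point is that a box whose coarse value sits well below the cap requires a correspondingly larger fine exceedance, and one must integrate the first moment jointly over the coarse level $x$ and optimize $\exp(-x^2/2s_1^2-(t_N-x)^2/2s_2^2)$ subject to $x\le\mathrm{cap}$ (the unconstrained optimizer lies above the cap precisely because $\sigma_1\ge\sigma_2$, which is where $\gamma_{max}=\sigma_1\alpha+\sigma_2(1-\alpha)$ comes from). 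That joint computation \emph{is} the GREM maximum estimate; your conditioning shortcut discards the correlation between the coarse value and the required fine deficit and cannot be repaired by prefactors. Since you ultimately rely on the comparison route, the proof goes through, but the ``equivalently'' should be dropped.
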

\begin{proof}
The case $\sigma_1\leq \sigma_2$ is direct by a union bound. 
In the case $\sigma_1\geq \sigma_2$, note that the field $\widetilde \psi$ defined in \eqref{eqn: tilde psi} but restricted to $V_N^\delta$ is a 2-level GREM with 
$cN^{2\alpha}$ (for some $c>0$) Gaussian variables of variance $\frac{\sigma_1^2\alpha}{\pi}\log N^2 + O_N(1)$ at the first level.
Indeed, for the field restricted to $V_N^\delta$, the variance of $\E[\phi_{\widetilde B}^2]$ is  $\frac{\sigma_1^2\alpha}{\pi}\log N^2 + O_N(1)$ by Lemma \ref{lem: green estimate}
since the distance to the boundary is a constant times $N$. Therefore, by Lemma \ref{lem: slepian} and Equation \eqref{eqn: corr bound}, we have
$$
\p\left(  \max_{v \in V_N^\delta} \psi_v  \ge  \sqrt{\frac{2}{\pi}} \gamma_{max}  \log N^2  \right) \leq \p\left(  \max_{v \in V_N^\delta} \widetilde \psi_v  \ge  \sqrt{\frac{2}{\pi}} \gamma_{max}  \log N^2  \right) \ .
$$
The result then follows from the maximal displacement of the 2-level GREM. We refer the reader to Theorem 1.1 in \cite{bovier-kurkova1} for the details.
\end{proof}

\begin{proposition}
\label{prop:perturbed-highpoints}
Let $\mathcal{H}_N^{\psi,\delta}(\gamma):=\left\{ v \in V_N^\delta: \,  \psi_v \ge  \sqrt{\frac{2}{\pi}} \gamma \log N^2 \right\}$ be the set of $\gamma$-high points within $V_N^\delta$ and define
$$
\begin{aligned}
&\text{if $\sigma\leq \sigma_2$} \qquad \mathcal{E}^{(\alpha,\vec{\sigma})}(\gamma):=1- \frac{\gamma^2}{V_{12}};\\
&\text{if $\sigma\geq \sigma_2$} \qquad \mathcal{E}^{(\alpha,\vec{\sigma})}(\gamma):=
\begin{cases}
1- \frac{\gamma^2}{V_{12}},
\ &\text{ if $ \gamma < \frac{V_{12}}{\sigma_1}$},
\\
(1-\alpha)  - \frac{(\gamma -\sigma_1\alpha)^2}{\sigma_2^2(1-\alpha)},
\ &\text{ if $ \gamma \geq  \frac{V_{12}}{\sigma_1}$.}
\end{cases}
\end{aligned}
$$
Then, for all $0<  \gamma < \gamma_{max},$ and for any $\mathcal{E}<\mathcal{E}^{(\alpha,\vec{\sigma})}(\gamma)$, there exists $c$ such that
\begin{equation}
\label{eqn: lower-}
\p\left(\vert  \mathcal{H}_N^{\psi,\delta}(\gamma)  \vert \le N^{2\mathcal{E}} \right) \le \exp \{- c (\log N)^2\}.
\end{equation}
\end{proposition}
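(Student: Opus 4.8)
The plan is to adapt the method of Daviaud \cite{daviaud} for the standard GFF to the two-scale field $\psi$, exploiting the multiscale decomposition \eqref{eqn: psi}. Throughout one works in $V_N^\delta$, where every point sits at distance $\Theta(N)$ from $\partial V_N$; this is exactly what makes the Green's function estimates of Lemma \ref{lem: green estimate} uniform, so that the boundary difficulties that appear in Lemma \ref{lem: boundary} are absent here. The lower bound is obtained by a first-and-second-moment argument organized along the two scales: a coarse scale carried by $\sigma_1\phi_{[v]_\alpha}$ and an independent fine scale carried by $\sigma_2(\phi_v-\phi_{[v]_\alpha})$. The governing principle is the entropy--energy competition of a $2$-level GREM, which is where the two regimes of $\mathcal{E}^{(\alpha,\vec{\sigma})}(\gamma)$ and the threshold $\gamma=V_{12}/\sigma_1$ originate.

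First I would compute the first moment through a prescribed profile. Fix $\gamma_1+\gamma_2=\gamma$ and count points $v\in V_N^\delta$ for which $\sigma_1\phi_{[v]_\alpha}\ge \sqrt{2/\pi}\,\gamma_1\log N^2$ and $\sigma_2(\phi_v-\phi_{[v]_\alpha})\ge \sqrt{2/\pi}\,\gamma_2\log N^2$. By Lemma \ref{lem: green estimate} the coarse part has variance $\frac{\sigma_1^2\alpha}{\pi}\log N^2$ and the fine part variance $\frac{\sigma_2^2(1-\alpha)}{\pi}\log N^2$, and the two are independent by the Markov property (Lemma \ref{lem: GFF}). A Gaussian tail estimate gives an expected number $N^{2[\alpha-\gamma_1^2/(\sigma_1^2\alpha)]}$ of boxes reaching the coarse level, times $N^{2[(1-\alpha)-\gamma_2^2/(\sigma_2^2(1-\alpha))]}$ points per box reaching the fine level. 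Maximizing the exponent $1-\gamma_1^2/(\sigma_1^2\alpha)-(\gamma-\gamma_1)^2/(\sigma_2^2(1-\alpha))$ over $\gamma_1$ yields the unconstrained optimizer $\gamma_1=\gamma\,\sigma_1^2\alpha/V_{12}$ with value $1-\gamma^2/V_{12}$, valid while $\gamma_1\le\sigma_1\alpha$, i.e. $\gamma\le V_{12}/\sigma_1$; beyond that threshold the constraint $\gamma_1\le\sigma_1\alpha$ binds, the optimizer is $\gamma_1=\sigma_1\alpha$, and the exponent becomes $(1-\alpha)-(\gamma-\sigma_1\alpha)^2/(\sigma_2^2(1-\alpha))$. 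This reproduces $\mathcal{E}^{(\alpha,\vec{\sigma})}(\gamma)$ in both regimes (and when $\sigma_1\le\sigma_2$ one checks $V_{12}/\sigma_1\ge\gamma_{max}$, so only the first regime occurs).

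To upgrade the first moment to a lower bound holding with probability $1-\exp\{-c(\log N)^2\}$, I would condition on the coarse field. Partition $V_N^\delta$ into boxes $B$ of side $\sim N^{1-\alpha}$ as in Section \ref{sect: upperbound}; by \eqref{eqn: lem12} one may replace $\phi_{[v]_\alpha}$ by the box-constant $\phi_{\widetilde B}$ at the cost of an $O_N(1)$ error in the exponents. The number of \emph{good boxes} (those with $\sigma_1\phi_{\widetilde B}$ near level $\gamma_1$) is a high-point count for the coarse-grained field $(\phi_{\widetilde B})_B$, which inherits the log-correlated covariance of the GFF at scale $N^\alpha$ by Lemma \ref{lem: green estimate}; hence Daviaud's estimate applies and produces $N^{2[\alpha-\gamma_1^2/(\sigma_1^2\alpha)]+o_N(1)}$ good boxes outside an event of probability $\exp\{-c(\log N)^2\}$. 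Conditionally on the coarse field, the internal fields $(\phi_u-\E[\phi_u\mid\F_{[v]_\alpha^c}])_{u\in B}$ in distinct boxes are independent GFFs on boxes of side $\sim N^{1-\alpha}$ (Markov property), so within each good box the number of fine high points is again governed by Daviaud's estimate, with failure probability $\exp\{-c(\log N)^2\}$. A union bound over the polynomially many boxes preserves this rate and gives $|\mathcal{H}_N^{\psi,\delta}(\gamma)|\ge N^{2\mathcal{E}}$ for every $\mathcal{E}<\mathcal{E}^{(\alpha,\vec{\sigma})}(\gamma)$.

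The main obstacle is the concentration step rather than the first moment: one must verify that the coarse-grained field $(\phi_{\widetilde B})_B$ satisfies the hypotheses of Daviaud's high-point estimate (uniform variance $\frac{\alpha}{\pi}\log N^2+O_N(1)$ on $V_N^\delta$ together with the correct two-point covariance at the box scale) and that the $O_N(1)$ discrepancy between $\phi_{[v]_\alpha}$ and $\phi_{\widetilde B}$ controlled by \eqref{eqn: lem12} perturbs only prefactors and never the exponents. The stretched-exponential rate $\exp\{-c(\log N)^2\}$ is not improved here; it is simply the rate already furnished by Daviaud's estimate at each of the two scales, and the two-scale conditioning---legitimate precisely because of the Markov property of Lemma \ref{lem: GFF}---is what transports it to $\psi$.
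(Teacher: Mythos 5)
Your proposal follows essentially the same route as the paper: a two-scale adaptation of Daviaud's high-points recursion (which the paper packages as Lemma \ref{lem:recurrence}, applied first from a scale $\alpha_0\to 0$ up to $\alpha$ with parameter $\sigma_1$ and then from $\alpha$ to $1$ with $\sigma_2$, initialized by Lemma \ref{lem:init}), together with the same optimization over the intermediate level $\gamma_1$ whose constrained/unconstrained maximizer produces the two regimes at $\gamma=V_{12}/\sigma_1$. The one point to be careful about is your claim that the fine fields $(\phi_u-\E[\phi_u\mid\F_{[u]_\alpha^c}])$ for $u$ in distinct boxes are independent: the neighborhoods $[u]_\alpha$ (and the boxes $\widetilde B$) overlap across adjacent $B$'s, so one must instead condition on a disjoint partition and track only box centers, which is precisely what the recursion over $\Pi_{\alpha'}$ in Lemma \ref{lem:recurrence} accomplishes.
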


Proposition \ref{prop:perturbed-highpoints} is obtained by a two-step recursion.
Two lemmas are needed. The first is a straightforward generalization of the lower bound in Daviaud's theorem (see Theorem 1.2 in \cite{daviaud} and its proof). 
For all $0<\alpha<1,$ denote by $\Pi_\alpha$ the centers of the square boxes in $\mathcal B_\alpha$ (as defined in Section \ref{sect: upperbound}) which also belong to $V_N^\delta$. 

 \begin{lemma} 
\label{lem:recurrence} 
Let $\alpha',\alpha'' \in (0,1]$ such that $0<\alpha'<\alpha'' \le \alpha$ or $\alpha \le \alpha'<\alpha'' \le 1.$ Denote by $\sigma$ the parameter $\sigma_1$ if $0<\alpha'<\alpha'' \le \alpha$ and by $\sigma$ the parameter $\sigma_2$ if $\alpha \le \alpha'<\alpha'' \le 1.$
Assume that the event
\begin{equation*}
\Xi:=\left\{\#\{v\in\Pi_{ \alpha'}:  \psi_v ( \alpha')  \geq  \gamma' \sqrt{\frac{2}{\pi}} \log N^2 \} \ge N^{\mathcal{E}'} \right\},
\end{equation*}
is such that
\begin{equation*}
\p(\Xi^c) \le \exp\{-c' (\log N)^2 \},
\end{equation*}
for some $\gamma' \ge 0$, $\mathcal{E}'>0$ and $c'>0$. 

Let \begin{equation*}
\mathcal{E}(\gamma):= \mathcal{E}' + (\alpha''-\alpha') - \frac{(\gamma-\gamma')^2}{ \sigma^2 (\alpha''-\alpha')}>0.
\end{equation*}
Then, for any $\gamma''$ such that $\mathcal{E}(\gamma'')>0$ and any $\mathcal{E}< \mathcal{E}(\gamma'')$, there exists $c$
such that
\begin{equation*}
\p\left(\#\{v\in\Pi_{ \alpha''}:  \psi_v ( \alpha'')    \geq \gamma'' \sqrt{\frac{2}{\pi}} \log N^2 \} \le N^{2\mathcal{E}}\right) \le \exp\{-c (\log N)^2 \} .
\end{equation*}
\end{lemma}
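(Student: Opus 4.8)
The plan is to adapt the lower-bound argument of Daviaud \cite{daviaud} to the increments of the coarse-grained field, the only genuinely new inputs being a scale-dependent variance and a uniform control of boundary effects on $V_N^\delta$. I read $\psi_v(\alpha')$ as the projection $\E[\psi_v\mid \F_{[v]_{\alpha'}^c}]$ of $\psi$ onto the scales below $\alpha'$, so that the increment from $\alpha'$ to $\alpha''$ is $\psi_v(\alpha'')-\psi_v(\alpha')=\sigma\,(\phi_{[v]_{\alpha''}}-\phi_{[v]_{\alpha'}})$, with $\sigma=\sigma_1$ when $\alpha''\le\alpha$ and $\sigma=\sigma_2$ when $\alpha'\ge\alpha$; this is precisely the dichotomy in the statement, and since both endpoints lie in the same regime a single $\sigma$ governs the step. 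The first task is to show, via Lemma \ref{lem: green estimate} and the Markov property (Lemma \ref{lem: GFF}), that for $v\in\Pi_{\alpha''}$ this increment is a centered Gaussian of variance $\frac{\sigma^2(\alpha''-\alpha')}{\pi}\log N^2+O_N(1)$ uniformly in $v$, because on $V_N^\delta$ the distances to the relevant box boundaries are all of order $N^{1-\alpha'}$ and $N^{1-\alpha''}$. The decisive structural fact, again from the domain-Markov property, is that conditionally on the field outside the scale-$\alpha'$ boxes the increments attached to children lying in distinct parent boxes of $\mathcal B_{\alpha'}$ are independent, while two children inside the same parent box are only weakly correlated, their covariance being $O_N(1)$ by the same Green's-function bookkeeping used for \eqref{eqn: fusion2}.

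Granting this, I would argue conditionally on the coarse $\sigma$-algebra that determines which boxes of $\mathcal B_{\alpha'}$ are high. On the event $\Xi$ there are at least $N^{2\mathcal{E}'}$ high parent boxes (counts being read throughout in the convention $N^{2(\cdot)}$ of the conclusion), and inside each such box there are of order $N^{2(\alpha''-\alpha')}$ children at scale $\alpha''$; by the Gaussian tail each child's increment exceeds $(\gamma''-\gamma')\sqrt{2/\pi}\log N^2$ with conditional probability $N^{-2(\gamma''-\gamma')^2/(\sigma^2(\alpha''-\alpha'))+o_N(1)}$. Writing $W$ for the total number of children that are $\gamma''$-high at scale $\alpha''$, a first-moment computation then gives $\E[W\mid\Xi]=N^{2\mathcal{E}(\gamma'')+o_N(1)}$, the exponent being exactly (parents) $+$ (children per parent) $-$ (tail cost); the strict inequality $\mathcal{E}<\mathcal{E}(\gamma'')$ leaves the room needed to absorb the $o_N(1)$ coming from the $O_N(1)$ variance corrections.

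To upgrade this expectation into a high-probability lower bound, I would exploit the across-parent independence noted above: conditionally on $\Xi$, $W$ dominates a sum of independent contributions indexed by the high parent boxes, the mild within-box correlations of the fine increments being handled by a truncated second-moment (Paley--Zygmund) estimate inside each box. Since the hypothesis $\mathcal{E}(\gamma'')>0$ forces $\E[W\mid\Xi]$ to be polynomially large, a Chernoff lower-tail bound over these independent contributions shows that $\p(W\le N^{2\mathcal{E}}\mid\Xi)$ is super-polynomially small, hence negligible compared with the term $\exp\{-c'(\log N)^2\}$ carried over from $\p(\Xi^c)$. In other words, the characteristic rate $\exp\{-c(\log N)^2\}$ is preserved rather than created at this step, and combining the conditional bound with $\p(\Xi^c)$ yields the claim.

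I expect the main obstacle to be the second half of the first paragraph: the Green's-function bookkeeping needed to certify simultaneously that the increment variance equals $\frac{\sigma^2(\alpha''-\alpha')}{\pi}\log N^2+O_N(1)$ uniformly over the children and that the cross-covariances of increments sharing a common parent box are only $O_N(1)$, so that the conditional first- and second-moment computations genuinely factorize. Once this near-independence and variance uniformity are in hand, the moment computations and the Chernoff step are routine, and the two regimes $\sigma=\sigma_1$ and $\sigma=\sigma_2$ are treated identically.
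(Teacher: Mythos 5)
Your overall strategy is the right one: the paper does not prove this lemma in-house but defers to the lower-bound recursion of Daviaud (Theorem 1.2 of \cite{daviaud} and its proof), and your plan --- write $\psi_w(\alpha'')$ as $\psi_w(\alpha')$ plus an increment of variance $\frac{\sigma^2(\alpha''-\alpha')}{\pi}\log N^2+O_N(1)$, approximate $\psi_w(\alpha')$ by $\psi_v(\alpha')$ for the parent $v$ via the $L^2$-estimates behind \eqref{eqn: fusion2} and \eqref{eqn: lem12b}, then run a first-moment/Paley--Zygmund/concentration argument over the high parents --- is exactly that recursion adapted to the $(\alpha,\vec{\sigma})$-field. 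Two of your supporting claims, however, are not correct as stated and would need repair before the moment computations ``genuinely factorize''.

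First, the covariance of the increments attached to two children $w,w'$ in the same parent box is \emph{not} $O_N(1)$: by Lemma \ref{lem: green estimate} it is $\frac{\sigma^2}{\pi}\log\bigl(N^{2(1-\alpha')}/\|w-w'\|^2\bigr)+O_N(1)$, which for neighbouring children ($\|w-w'\|\asymp N^{1-\alpha''}$) is of order $(\alpha''-\alpha')\log N^2$, i.e.\ comparable to the full increment variance; only children at mutual distance of order $N^{1-\alpha'}$ decorrelate to $O_N(1)$. The increment field inside a parent box is itself log-correlated, so the within-box lower bound on $\p(W_B\ge 1)$ is not a perturbation of the independent case but requires the genuine truncated second-moment computation for a log-correlated field --- this is the actual content of Daviaud's proof, and your description mislocates the difficulty even though you do invoke Paley--Zygmund. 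Second, increments attached to distinct boxes of $\mathcal B_{\alpha'}$ are not conditionally independent given the field outside those boxes: the neighbourhood $[v]_{\alpha'}$ has side $N^{1-\alpha'}$, a hundred times the side of the boxes of $\mathcal B_{\alpha'}$, so the neighbourhoods of parents in nearby boxes overlap heavily. To obtain the independent family over which your Chernoff bound runs, one must first pass to a sub-collection of high parents whose $\alpha'$-neighbourhoods are pairwise disjoint (costing only a constant factor in the count $N^{2\mathcal{E}'}$, harmless since $\mathcal{E}<\mathcal{E}(\gamma'')$ is strict), condition on the field on the union of the boundaries of these disjoint neighbourhoods, and use the $O_N(1)$ $L^2$-approximations to make each child's increment a functional of the resulting independent interior fields. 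With these two repairs your argument goes through and coincides with the proof the paper is pointing to.
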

We stress that $\gamma''$ may be such that $\mathcal{E}(\gamma'')<\mathcal{E}'$.
The second lemma, which follows, serves as the starting point of the recursion and is proved in \cite{bolthausen-deuschel-giacomin} (see Lemma 8 in \cite{bolthausen-deuschel-giacomin}).
 \begin{lemma}
\label{lem:init}
For any $\alpha_0$ such that $0<\alpha_0<\alpha$, there exists $\mathcal E_0=\mathcal E_0(\alpha_0)>0$ and $c=c(\alpha_0)$ such that
$$
\p\left(\#\{v\in\Pi_{ \alpha_0}:  \psi_v ( \alpha_0) \geq 0\}\leq N^{\mathcal E_0}\right)\leq  \exp\{-c (\log N)^2 \} .
$$
\end{lemma}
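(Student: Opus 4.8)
The plan is to reduce the statement to a high-points count for the unperturbed coarse field of the 2D GFF and then make a second-moment estimate robust through the spatial Markov property. Since $\alpha_0<\alpha$, the field at scale $\alpha_0$ is $\psi_v(\alpha_0)=\sigma_1\phi_{[v]_{\alpha_0}}$, and as $\sigma_1>0$ the event $\{\psi_v(\alpha_0)\ge 0\}$ is exactly $\{\phi_{[v]_{\alpha_0}}\ge 0\}$, so $\sigma_1$ plays no role. For $v\in V_N^\delta$, Lemma~\ref{lem: green estimate} gives $\E[\phi_{[v]_{\alpha_0}}^2]=\tfrac{\alpha_0}{\pi}\log N^2+O_N(1)$; hence $\phi_{[v]_{\alpha_0}}$ is centered Gaussian and $\p(\phi_{[v]_{\alpha_0}}\ge 0)=\tfrac12$. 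Writing $X:=\#\{v\in\Pi_{\alpha_0}:\phi_{[v]_{\alpha_0}}\ge 0\}$ and using $|\Pi_{\alpha_0}|=cN^{2\alpha_0}(1+o_N(1))$, this gives $\E[X]=\tfrac12|\Pi_{\alpha_0}|$, and the claim amounts to showing $X$ is comparable to its mean with the stated probability.

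For the variance, the pair $(\phi_{[v]_{\alpha_0}},\phi_{[v']_{\alpha_0}})$ is centered Gaussian with nonnegative correlation $\rho_{vv'}$, by the harmonic representation and positivity of the Green function (Lemma~\ref{lem: GFF}). The orthant probability is $\tfrac14+\tfrac{1}{2\pi}\arcsin\rho_{vv'}\le\tfrac14(1+\rho_{vv'})$, while Lemma~\ref{lem: green estimate} yields $\rho_{vv'}\approx\tfrac{1}{\alpha_0}\bigl(1-\tfrac{\log\|v-v'\|^2}{\log N^2}\bigr)$ once $[v]_{\alpha_0}$ and $[v']_{\alpha_0}$ are disjoint. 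Summing over distance shells shows the off-diagonal contribution $\sum_{v\neq v'}\rho_{vv'}$ is $o(N^{4\alpha_0})$, so $\E[X^2]\le(\E[X])^2(1+o_N(1))$, and Paley--Zygmund gives $X\ge\tfrac12\E[X]$ with probability bounded away from $0$.

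To upgrade this to the stretched-exponential estimate I would use the Markov decomposition to manufacture independence. Fix $\alpha_1\in(0,\alpha_0)$ and partition $V_N^\delta$ into $\sim N^{2\alpha_1}$ sub-boxes $D_i$ of side $N^{1-\alpha_1}$. By Lemma~\ref{lem: GFF}, $\phi=h_i+\xi_i$ on $D_i$, where $h_i=\E[\phi\mid\F_{D_i^c}]$ is the harmonic part and $\xi_i$ is a GFF inside $D_i$, the $(\xi_i)_i$ being independent. For a center $v$ with $[v]_{\alpha_0}\subset D_i$ one has $\phi_{[v]_{\alpha_0}}=h_i(v)+\E[\xi_i(v)\mid\F_{[v]_{\alpha_0}^c}]$, so conditionally on $(h_i)_i$ the block events $E_i:=\{\phi_{[v]_{\alpha_0}}\ge 0\text{ for some }v\text{ with }[v]_{\alpha_0}\subset D_i\}$ are independent. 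Call $D_i$ \emph{good} if $h_i$ is not of order $\log N$ below its mean; on a good block the localized second moment from the previous step, applied to its $\sim N^{2(\alpha_0-\alpha_1)}$ inner centers, gives $\p(E_i\mid(h_j)_j)\ge p$ for a constant $p>0$. A Chernoff bound across the good blocks then forces $X\ge c\,N^{2\alpha_1}$---the claim with any $\mathcal E_0<2\alpha_1$---with failure probability $\exp\{-cN^{2\alpha_1}\}$, provided a positive fraction of the blocks are good.

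The main obstacle is exactly this last proviso: $\{X\le N^{\mathcal E_0}\}$ would force a macroscopic fraction of the harmonic parts $h_i$ to lie of order $\log N$ below their mean at once, and because the logarithmic correlations cannot be removed by spatial separation, a union bound over blocks is far too lossy. This is precisely the type of strong-deviation (entropic-repulsion) estimate for the 2D GFF established in Lemma~8 of \cite{bolthausen-deuschel-giacomin}, which supplies the bound $\exp\{-c(\log N)^2\}$. Since, after discarding $\sigma_1$, our statement is a restatement of that lemma for the scale-$\alpha_0$ coarse field over $V_N^\delta$, I would invoke it to close the argument.
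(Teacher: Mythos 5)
Your proposal closes exactly as the paper does: since $\alpha_0<\alpha$ gives $\psi_v(\alpha_0)=\sigma_1\phi_{[v]_{\alpha_0}}$ with $\sigma_1>0$, the statement reduces to the corresponding zero-level high-points count for the unperturbed coarse field, which is precisely Lemma~8 of \cite{bolthausen-deuschel-giacomin} --- and the paper offers no proof beyond that citation. Your intermediate second-moment and block-decomposition discussion is scaffolding you yourself discard in favor of that citation (note in passing that $\sum_{v\neq v'}\rho_{vv'}$ is actually of order $N^{4\alpha_0}$, not $o(N^{4\alpha_0})$, since neighboring centers of $\Pi_{\alpha_0}$ have correlation close to $1$ at scale $\alpha_0$; this is harmless because Paley--Zygmund only needs $\E[X^2]\le C(\E[X])^2$), so the operative content of your argument coincides with the paper's.
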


\begin{proof}[Proof of Proposition \ref{prop:perturbed-highpoints}]
 Let $\gamma$ such that $0<  \gamma < \gamma_{max}$ and choose $\mathcal{E}$ such that $\mathcal{E}<\mathcal{E}^{(\alpha,\vec{\sigma})}(\gamma)$.  By Lemma \ref{lem:init}, for $\alpha_0<\alpha$ arbitrarily close to $0$, there exists $\mathcal E_0=\mathcal E_0(\alpha_0)>0$ and $c_0=c_0(\alpha_0)>0$, such that
\begin{equation}
\label{eqn: lower}
\p\left(\#\{v\in\Pi_{ \alpha_0}:  \psi_v ( \alpha_0) \geq 0\}\leq N^{2 \mathcal E_0}\right) \leq  \exp\{-c_0 (\log N)^2 \} .
\end{equation}
Moreover, let 
\begin{equation}
\label{eqn: E_1}
\mathcal{E}_1(\gamma_1):= \mathcal{E}_0 + (\alpha-\alpha_0) - \frac{\gamma_1^2}{ \sigma_1^2 (\alpha-\alpha_0)}  .
\end{equation}
Lemma \ref{lem:recurrence} is applied from $\alpha_0$ to $\alpha$. For any $\gamma_1$ with $\mathcal{E}_1(\gamma_1)>0$ and any 
$\mathcal{E}_1<\mathcal{E}_1(\gamma_1)$, there exists $c_1>0$ such that
$$
\p\left(\#\{v\in\Pi_{ \alpha}:  \psi_v ( \alpha)  \geq  \gamma_1 \sqrt{\frac{2}{\pi}} \log N^2 \}  \leq N^{2 \mathcal{E}_1} \right)\leq  \exp\{-c_1 (\log N)^2 \}.
$$
Therefore, Lemma \ref{lem:recurrence} can be applied again from $\alpha$ to $1$ for any $\gamma_1$ with $\mathcal{E}_1(\gamma_1)>0$. Define similarly
$
\mathcal{E}_2(\gamma_1,\gamma_2):= \mathcal{E}_1(\gamma_1) + (1-\alpha) - (\gamma_2-\gamma_1)^2/ \sigma_2^2 (1-\alpha).
$
Then, for any $\gamma_2$ with $\mathcal{E}_2(\gamma_1,\gamma_2)>0$, 
and $\mathcal{E}_2<\mathcal{E}_2(\gamma_1,\gamma_2)$, there exists $c_2>0$ such that
\begin{equation}
\label{eqn: lower prob}
\p\left(\#\{v\in V_N^\delta:  \psi_v   \geq  \gamma_2 \sqrt{\frac{2}{\pi}} \log N^2 \} \leq N^{2 \mathcal{E}_2} \right)\leq  \exp\{-c_2 (\log N)^2 \} .
\end{equation}
Observing that $0 \le \mathcal E_0 \le \alpha_0,$ Equation \eqref{eqn: lower-} follows from \eqref{eqn: lower prob} if it is proved that $\lim_{\alpha_0 \to 0}\mathcal{E}_2(\gamma_1,\gamma) = \mathcal{E}^{(\alpha,\vec{\sigma})}(\gamma)$ for an appropriate choice of $\gamma_1$ (in particular such that $\mathcal{E}_1(\gamma_1)>0$).
It is easily verified that, for a given $\gamma$, the quantity $\mathcal{E}_2(\gamma_1,\gamma)$ is maximized at
$
\gamma_1^*= \gamma \sigma_1^2(\alpha-\alpha_0)/(V_{12}-\sigma_1^2\alpha_0).
$
Plugging these back in \eqref{eqn: E_1} shows that $\mathcal{E}_1(\gamma_1^*)>0$ provided that
$
 \gamma< V_{12}/\sigma_1=:\gamma_{crit},
$
with $\alpha_0$ small enough (depending on $\gamma$).
Furthermore, since
$
\mathcal{E}_2(\gamma_1^*,\gamma)= \mathcal{E}_0 +(1-\alpha_0)- \gamma^2/(V_{12}-\sigma_1^2\alpha_0),
$
we obtain
$
\lim_{\alpha_0 \to 0} \mathcal{E}_2(\gamma_1^*,\gamma) = \mathcal{E}^{(\alpha,\vec{\sigma})}(\gamma),
$
which concludes the proof in the case $0<\gamma < \gamma_{crit}.$

If $\gamma_{crit} \leq \gamma < \gamma_{max}$, the condition $\mathcal{E}_1(\gamma_1^*)>0$ is violated as $\alpha_0$ goes to zero. However, the previous arguments can easily be adapted and we refer to subsection 3.1.2 in \cite{arguin-zindy} for more details.
\end{proof}

\begin{proof}[Proof of the lower bound of  Theorem \ref{thm:freeenergyperturbed}]
We will prove that for any $\nu>0$
$$
\p\left( f_{N,\rho}^{(\alpha,\vec{\sigma})}(\beta) \le f^{(\alpha,\vec{\sigma})}(\beta) - \nu \right) \longrightarrow 0, \qquad N \to 0.
$$
Define $\gamma_i:=i \gamma_{\max}/M$ for $0 \le i \le M$ ($M$ will be chosen large enough). Notice that Proposition \ref{prop:perturbed-maximum}, Proposition \ref{prop:perturbed-highpoints} and the symmetry property of centered Gaussian random variables imply that the event
\begin{eqnarray*}
\nonumber
 B_{N,M,\nu}&:=&\bigcap_{i=0}^{M-1} \left\{ \vert \mathcal{H}_N^{\psi,\delta}(\gamma_i) \vert   \ge  N^{2 \mathcal{E}^{(\alpha,\vec{\sigma})}(\gamma_{i})-\nu/3}  \right\}
 \bigcap \left\{  \max_{v \in V_N^\delta} \vert  \psi_v \vert  \le \sqrt{\frac{2}{\pi}} \gamma_{\max} \log N^2 \right\}
\end{eqnarray*}
satisfies
$$
\p( B_{N,M,\nu}) \longrightarrow 1, \qquad N \to \infty,
$$
for all $M \in \n^*$ and all $\nu>0.$ Then, observe that on $B_{N,M,\nu}$
\begin{eqnarray*}
Z_{N,\rho}^{(\alpha,\vec{\sigma})}(\beta) &\ge& \sum_{v \in V_N^\delta} \ee^{\beta \psi_v} \ge
 \sum_{i=1}^{M} ( \vert \mathcal{H}_N^{\psi,\delta}(\gamma_{i-1}) \vert- \vert \mathcal{H}_N^{\psi,\delta}(\gamma_i) \vert) N^{2 \sqrt{\frac{2}{\pi}} \gamma_{i-1} \beta}
\\
&=& \vert \mathcal{H}_N^{\psi,\delta}(0) \vert +   \Big(2 \sqrt{\frac{2}{\pi}} \frac{\gamma_{\max}}{M} \beta \log N\Big) \int_1^{M}  \vert  \mathcal{H}_N^{\psi,\delta}(\frac{\lfloor u \rfloor \gamma_{\max}}{M}) \vert  N^{2 \sqrt{\frac{2}{\pi}} \frac{u-1}{M}\gamma_{\max} \beta} d u
\\
&\ge&  \Big(2\sqrt{\frac{2}{\pi}} \frac{\gamma_{\max}}{M} \beta \log N\Big) \sum_{i=1}^{M-1}  \vert  \mathcal{H}_N^{\psi,\delta}( \gamma_i) \vert  N^{2 \sqrt{\frac{2}{\pi}} \gamma_{i-1}  \beta},
\end{eqnarray*}
where we used Abel's summation by parts formula.
Writing $\gamma_{i-1}=\gamma_{i}-\gamma_{\max}/M$ and $P_\beta(\gamma):= \mathcal{E}^{(\alpha,\vec{\sigma})}(\gamma) + \sqrt{\frac{2}{\pi}}  \beta \gamma,$ we get on $B_{N,M,\nu}$
\begin{equation}
\label{eqn: lower bound}
 f_{N,\rho}^{(\alpha,\vec{\sigma})}(\beta) \ge  
\frac{1}{\log N^2}\log  \left(\sum_{i=1}^{M-1}  N^{2 P_\beta(\gamma_{i})}\right) -\frac{\nu}{6}- \frac{ \sqrt{\frac{2}{\pi}} \gamma_{\max}\beta}{M}+o_N(1)\ .
\end{equation}

Using the expression of $ \mathcal{E}^{(\alpha,\vec{\sigma})}$ in Proposition \ref{prop:perturbed-highpoints} on the different intervals, 
it is easily checked by differentiation that
$
 \max_{\gamma \in \left[0,\gamma_{\max} \right]} P_{\beta}(\gamma)=f^{(\alpha,\vec{\sigma})}(\beta).
$
Furthermore, the continuity of $\gamma \mapsto P_{\beta}(\gamma)$ on $\left[0,\gamma_{\max} \right]$ yields 
\begin{equation*}
\max_{1 \le i \le M-1} P_{\beta}(\gamma_i) \longrightarrow \max_{\gamma \in \left[0,\gamma_{\max} \right]} P_{\beta}(\gamma)=f^{(\alpha,\vec{\sigma})}(\beta), \qquad M \to \infty.
\end{equation*}
Therefore, choosing $M$ large enough and applying Laplace's method in \eqref{eqn: lower bound} yield the result.
\end{proof}


\section{appendix}

The conditional expectation of the GFF has nice features such as the Markov property, see e.g. Theorems 1.2.1 and 1.2.2 in \cite{dynkin} for a general statement
on Markov fields constructed from symmetric Markov processes.
\begin{lemma}
\label{lem: GFF}
Let $B\subset A$ be subsets of $\Z^2$. Let $(\phi_v,v\in A)$ be a GFF on $A$. Then
$$
\E[\phi_v | \F_{B^c}]=\E[\phi_v | \F_{\partial B}], \qquad \forall v\in B,
$$
and 
$$
(\phi_v -\E[\phi_v | \F_{\partial B}], v\in B)
$$
has the law of a GFF on $B$.
Moreover, if $P_v$ is the law of a simple random walk starting at $v$ and $\tau_B$ is the first exit time of $B$, we have
$$
\E[\phi_v | \F_{\partial B}]=\sum_{u\in \partial B} P_v(S_{\tau_B}=u) ~\phi_u\ .
$$

\end{lemma}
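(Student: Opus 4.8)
The plan is to reduce all three assertions to a single decomposition of the Green's function obtained from the strong Markov property of the killed random walk, and then to exploit that for a jointly Gaussian family conditional expectations are orthogonal projections, so that ``uncorrelated'' coincides with ``independent''. Throughout I write $p_v(u):=P_v(S_{\tau_B}=u)$ for $u\in\partial B$ and set $h_v:=\sum_{u\in\partial B}p_v(u)\,\phi_u$, the candidate for $\E[\phi_v\mid\F_{\partial B}]$; by construction $h_v$ is $\F_{\partial B}$-measurable.

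The heart of the argument is the identity
\begin{equation}
\label{eqn: green decomp}
G_A(v,v')=G_B(v,v')+\sum_{u\in\partial B}p_v(u)\,G_A(u,v'),\qquad v,v'\in B,
\end{equation}
together with the observation that $G_B(v,v')=0$ whenever $v'\in B^c$, since the walk killed at $\tau_B$ visits no site outside $B$ before that time. To obtain \eqref{eqn: green decomp} I would split the visits to $v'$ counted by $G_A(v,v')$ according to whether they occur strictly before $\tau_B$ or from $\tau_B$ onward: the former contribute exactly $G_B(v,v')$ because the walk stays in $B$ up to $\tau_B$, and for the latter I apply the strong Markov property at $\tau_B$, writing the future as a fresh killed walk started at $S_{\tau_B}\in\partial B$. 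The one point requiring care is that $\partial B$ may contain vertices of $A^c$ on which the $A$-walk is already dead; there the convention $G_A(u,\cdot)=0$ makes the corresponding terms vanish, so \eqref{eqn: green decomp} holds verbatim.

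With \eqref{eqn: green decomp} in hand the three statements follow quickly. For any $w\in A\setminus B$ we have $w\in B^c$, hence $G_B(v,w)=0$ and \eqref{eqn: green decomp} gives $G_A(v,w)=\sum_u p_v(u)G_A(u,w)$, that is
$$
\E\big[(\phi_v-h_v)\phi_w\big]=G_A(v,w)-\sum_{u\in\partial B}p_v(u)G_A(u,w)=0 .
$$
Thus $\phi_v-h_v$ is uncorrelated with every generator $\phi_w$ of $\F_{B^c}$; as the whole family is centered Gaussian, $\phi_v-h_v$ is independent of $\F_{B^c}\supseteq\F_{\partial B}$. Since $h_v$ is $\F_{\partial B}$-measurable, this identifies $\E[\phi_v\mid\F_{B^c}]=h_v=\E[\phi_v\mid\F_{\partial B}]$, proving both the Markov property and the harmonic representation $\E[\phi_v\mid\F_{\partial B}]=\sum_{u\in\partial B}P_v(S_{\tau_B}=u)\phi_u$ simultaneously.

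Finally, for the law of the centered field $(\phi_v-h_v,\,v\in B)$ it suffices to compute its covariance. Using that $\phi_v-h_v\perp\F_{\partial B}$ while $h_{v'}$ is $\F_{\partial B}$-measurable, the cross term drops and
$$
\E\big[(\phi_v-h_v)(\phi_{v'}-h_{v'})\big]=\E\big[(\phi_v-h_v)\phi_{v'}\big]=G_A(v,v')-\sum_{u\in\partial B}p_v(u)G_A(u,v')=G_B(v,v'),
$$
by \eqref{eqn: green decomp}. Hence $(\phi_v-h_v,\,v\in B)$ is a centered Gaussian field with covariance $G_B$, i.e.\ a GFF on $B$. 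The main obstacle is really the clean justification of \eqref{eqn: green decomp}: one must handle the killing convention and the fact that part of $\partial B$ can lie outside $A$; once that bookkeeping is settled, the Gaussian computations are immediate.
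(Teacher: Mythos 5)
Your proof is correct. The paper itself does not prove Lemma \ref{lem: GFF}: it simply invokes the general theory of Markov fields associated with symmetric Markov processes (Theorems 1.2.1 and 1.2.2 of Dynkin, as cited in the appendix). Your argument is instead a direct, self-contained verification tailored to the discrete setting: the first-passage decomposition $G_A(v,v')=G_B(v,v')+\sum_{u\in\partial B}p_v(u)G_A(u,v')$ obtained from the strong Markov property at $\tau_B$, combined with the fact that for a jointly Gaussian family orthogonality implies independence, so that identifying the conditional expectation and the law of the residual field reduces to covariance computations. This buys an elementary and fully explicit proof (including the harmonic-average formula and the correct handling of the case where $\partial B$ meets $A^c$), at the cost of being specific to the Gaussian random-walk setting --- which is all that is needed here. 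One small bookkeeping remark: with the paper's convention that the sum in \eqref{eqn: cov} runs up to the exit time inclusive, $G_B(v,w)$ does not literally vanish for $w\in\partial B$ (it equals $p_v(w)$); but this value is irrelevant to the lemma, and for $w\in A\setminus B$ the identity you actually need, $G_A(v,w)=\sum_{u\in\partial B}p_v(u)G_A(u,w)$, follows directly from the strong Markov property because the walk cannot visit $w$ strictly before $\tau_B$ and the possible visit at time $\tau_B$ is exactly the $k=0$ term of the restarted walk, so nothing is double counted. Everything else goes through as written.
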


The following estimate on the Green function can be found as Lemma 2.2 in \cite{ding} and is a combination of Proposition 4.6.2 and Theorem 4.4.4 in \cite{lawler-limic}.
\begin{lemma}
\label{lem: green estimate}
There exists a function $a:  \z^2 \times \z^2  \mapsto [0,\infty)$ of the form 
$$
a(v,v')= \frac{2}{\pi}\log \|v-v'\| +\frac{2\gamma_0 \log 8}{\pi} + O(\|v-v'\|^{-2})
$$
(where $\gamma_0$ denotes the Euler's constant) such that $a(v,v)=0$ and
$$
G_{A}(v,v')=E_{v}\left[a(v', S_{\tau_A})\right] - a(v,v')\ .
$$
\end{lemma}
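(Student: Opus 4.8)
The plan is to deduce the statement from the two classical facts about the \emph{potential kernel} of the simple random walk on $\z^2$, which is the function
$$
a(x):=\sum_{n\ge 0}\big(P_0(S_n=0)-P_0(S_n=x)\big),
$$
viewed as a function of the difference $x=v-v'$ and extended by $a(v,v'):=a(v-v')$. The series converges thanks to recurrence-induced cancellation, and two of its properties carry the whole argument. First, a one-line computation using the symmetry $p(x,y)=p(y,x)$ of the steps shows that $a$ solves the discrete Poisson equation $\Delta a=\delta_0$, where $\Delta f(x):=\tfrac14\sum_{y\sim x}f(y)-f(x)$ is the generator; in particular $a$ is harmonic off the origin, $a(0)=0$ (so $a(v,v)=0$), and nonnegativity of $a$ is classical. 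Second, $a$ admits the rotationally symmetric expansion $a(x)=\tfrac{2}{\pi}\log\|x\|+\kappa+O(\|x\|^{-2})$ with the explicit constant $\kappa$ displayed in the statement.

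I would obtain this expansion from the Fourier representation
$$
a(x)=\int_{[-\pi,\pi]^2}\frac{1-\cos(\theta\cdot x)}{1-\hat p(\theta)}\,\frac{\d\theta}{(2\pi)^2},\qquad \hat p(\theta)=\tfrac12(\cos\theta_1+\cos\theta_2).
$$
The leading term $\tfrac{2}{\pi}\log\|x\|$ comes from the singularity of the integrand at $\theta=0$, where $1-\hat p(\theta)\sim\tfrac14\|\theta\|^2$ reproduces the continuum Green kernel of the Laplacian; isolating this singularity and subtracting the corresponding continuous integral yields the constant $\kappa$, while the lattice correction to $1-\hat p$ together with a stationary-phase estimate of the oscillatory remainder produces the $O(\|x\|^{-2})$ error. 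This analytic expansion is the main obstacle, and it is exactly Theorem 4.4.4 of \cite{lawler-limic}, which I would invoke rather than reprove.

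With these two facts in hand, the Green function identity reduces to a clean optional-stopping argument. Fix $v'\in A$ and apply Dynkin's formula to $a(v',\cdot)$ as a function of the second variable: since $\Delta a(v',S_k)=\mathbf 1\{S_k=v'\}$, the process $M_n:=a(v',S_n)-\sum_{k=0}^{n-1}\mathbf 1\{S_k=v'\}$ is a $P_v$-martingale with $M_0=a(v',v)$. Because $A$ is finite the walk exits almost surely, so $\tau_A<\infty$, and $S_{n\wedge\tau_A}$ stays in the finite set $A\cup\partial A$, whence $a(v',S_{n\wedge\tau_A})$ is uniformly bounded. Applying optional stopping at the bounded times $n\wedge\tau_A$ and letting $n\to\infty$ — bounded convergence for the boundary term, monotone convergence for the occupation sum — gives $E_v[a(v',S_{\tau_A})]-E_v\big[\sum_{k=0}^{\tau_A-1}\mathbf 1\{S_k=v'\}\big]=a(v',v)$. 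Since $S_{\tau_A}\in\partial A$ while $v'\in A$, the $k=\tau_A$ term vanishes, so the expected occupation time equals $G_A(v,v')$; recalling $a(v',v)=a(v,v')$ and rearranging yields $G_A(v,v')=E_v[a(v',S_{\tau_A})]-a(v,v')$, which is Proposition 4.6.2 of \cite{lawler-limic}. The only delicate point is the interchange of limit and expectation, but this is immediate from the finiteness of $A$ and of $G_A(v,v')$.
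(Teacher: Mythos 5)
Your proposal is correct and follows exactly the route the paper itself takes: the paper offers no proof beyond citing Lemma 2.2 of \cite{ding} as a combination of Proposition 4.6.2 and Theorem 4.4.4 of \cite{lawler-limic}, and your argument simply unpacks those two ingredients — the asymptotic expansion of the potential kernel (which you, like the paper, invoke rather than reprove) and the identity $G_A(v,v')=E_v[a(v',S_{\tau_A})]-a(v,v')$, for which your optional-stopping derivation via the martingale $M_n=a(v',S_n)-\sum_{k=0}^{n-1}\mathbf 1\{S_k=v'\}$ is the standard and correct proof, including the observation that the $k=\tau_A$ term vanishes because $S_{\tau_A}\in\partial A$ while $v'\in A$. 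No gaps.
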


Slepian's comparison lemma can be found in \cite{ledoux-talagrand} and in \cite{kahane} for the result on log-partition function.
\begin{lemma}
\label{lem: slepian}
Let $(X_1,\cdots, X_N)$ and $(Y_1,\cdots, Y_N)$ be two centered Gaussian vectors in $N$ variables such that
$$
\E[X_i^2]= \E[Y_i^2]\  \forall i,  \qquad \E[X_iX_j]\geq \E[Y_iY_j] \ \forall i\neq j\ .
$$
Then for all $\beta>0$
$$
\E\left[\log \sum_{i=1}^N e^{\beta X_i}\right]\leq \E\left[\log \sum_{i=1}^N e^{\beta Y_i}\right]\ ,
$$
and for all $\lambda>0$,
$$
\p\left(\max_{i=1,\dots,N} X_i > \lambda\right) \leq \p\left(\max_{i=1,\dots,N}  Y_i > \lambda\right) \ .
$$
\end{lemma}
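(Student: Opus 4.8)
The plan is to prove both comparison inequalities by the Gaussian interpolation method of Kahane, realizing $(X_i)$ and $(Y_i)$ as independent vectors on a common space and interpolating along the path $Z_i(t):=\sqrt{t}\,X_i+\sqrt{1-t}\,Y_i$, $t\in[0,1]$, so that $Z(1)=X$ and $Z(0)=Y$. The key structural input is that, since $X$ and $Y$ are independent and centered, for every $i,j$ one has $\mathrm{Cov}(\dot Z_i(t),Z_j(t))=\tfrac12(\E[X_iX_j]-\E[Y_iY_j])=:\tfrac12 C_{ij}$, where by hypothesis $C_{ii}=0$ for all $i$ and $C_{ij}\ge 0$ for $i\neq j$. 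Both conclusions then follow by differentiating a suitable functional of $Z(t)$ in $t$, applying Gaussian integration by parts to express the derivative in terms of the $C_{ij}$, and reading off the sign from these two conditions on $C$.

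For the log-partition inequality, set $\varphi(t):=\E[\log\sum_i e^{\beta Z_i(t)}]$ and write $G_i:=e^{\beta Z_i}/\sum_k e^{\beta Z_k}$ for the Gibbs weights. Differentiating gives $\varphi'(t)=\beta\sum_i\E[\dot Z_i\,G_i]$, and Gaussian integration by parts together with $\partial_{z_j}G_i=\beta(\delta_{ij}G_i-G_iG_j)$ yields $\varphi'(t)=\tfrac{\beta^2}{2}\sum_{i,j}C_{ij}\,\E[\delta_{ij}G_i-G_iG_j]$. Since $C_{ii}=0$, the diagonal contribution vanishes and $\varphi'(t)=-\tfrac{\beta^2}{2}\sum_{i\neq j}C_{ij}\,\E[G_iG_j]\le 0$, because $C_{ij}\ge 0$ and $G_iG_j\ge 0$. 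Hence $\varphi$ is nonincreasing and $\varphi(1)\le\varphi(0)$, which is exactly the first claim.

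For the tail inequality I would run the same interpolation on a product test function. Fix a smooth nonincreasing $f:\R\to[0,1]$ and set $\Phi(t):=\E[\prod_i f(Z_i(t))]$. Gaussian integration by parts gives $\Phi'(t)=\tfrac12\sum_{i,j}C_{ij}\,\E[\partial_i\partial_j\prod_k f(Z_k)]$; again $C_{ii}=0$ kills the diagonal, while for $i\neq j$ one has $\partial_i\partial_j\prod_k f=f'(Z_i)f'(Z_j)\prod_{k\neq i,j}f(Z_k)\ge 0$ since $f'\le 0$ and $f\ge 0$. Thus $\Phi'(t)\ge 0$, so $\Phi(1)\ge\Phi(0)$, i.e. $\E[\prod_i f(X_i)]\ge\E[\prod_i f(Y_i)]$. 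Choosing a sequence of smooth nonincreasing functions $f\downarrow\mathbf 1_{(-\infty,\lambda]}$ and passing to the limit by dominated convergence gives $\p(\max_i X_i\le\lambda)\ge\p(\max_i Y_i\le\lambda)$, which is equivalent to the second claim.

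The main technical point, rather than a genuine obstacle, is the justification of the two Gaussian integration-by-parts steps and of differentiating under the expectation; these need only mild integrability, which holds because the integrands and their relevant derivatives are bounded by polynomials in $e^{\beta|Z_i|}$ while the $Z_i(t)$ are jointly Gaussian with variances bounded uniformly in $t$. The one place deserving care is the final limiting step $f\downarrow\mathbf 1_{(-\infty,\lambda]}$, handled by bounded convergence since $\prod_i f(Z_i)\le 1$. One may alternatively recover the tail statement from the log-partition inequality via the scaling $\tfrac1\beta\log\sum_i e^{\beta z_i}\to\max_i z_i$ as $\beta\to\infty$, but the product-function route is cleaner and avoids any uniform-integrability bookkeeping.
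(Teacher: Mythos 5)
Your proof is correct and is the standard Kahane/Slepian interpolation argument; the paper itself gives no proof of this lemma, merely citing Ledoux--Talagrand and Kahane, whose proofs are precisely the interpolation $Z(t)=\sqrt{t}\,X+\sqrt{1-t}\,Y$ with Gaussian integration by parts that you carry out. One small caveat: your parenthetical alternative of recovering the tail comparison from the log-partition inequality via $\tfrac1\beta\log\sum_i e^{\beta z_i}\to\max_i z_i$ would only yield $\E[\max_i X_i]\le\E[\max_i Y_i]$, not the pointwise bound $\p(\max_i X_i>\lambda)\le\p(\max_i Y_i>\lambda)$, so the product-test-function route you actually use is the right (and necessary) one.
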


\bigskip

{\noindent\bf Acknowledgements:} The authors would like to thank the Centre International de Rencontres Math\'ematiques in Luminy for hospitality and financial support during part of this work.

\end{document}